\definecolor{Black}{cmyk}{0,0,0,1}
\definecolor{OrangeRed}{cmyk}{0,0.6,1,0} % half magenta only, full yellow
\definecolor{DarkBlue}{cmyk}{1,1,0,0.20}
\definecolor{myblue}{rgb}{0.66,0.78,1.00}
\definecolor{Violet}{cmyk}{0.79,0.88,0,0}
\definecolor{Lavender}{cmyk}{0,0.48,0,0}
\def\Xint#1{\mathchoice
{\XXint\displaystyle\textstyle{#1}}%
{\XXint\textstyle\scriptstyle{#1}}%
{\XXint\scriptstyle\scriptscriptstyle{#1}}%
{\XXint\scriptscriptstyle\scriptscriptstyle{#1}}%
\!\int}
\def\XXint#1#2#3{{\setbox0=\hbox{$#1{#2#3}{\int}$ }
\vcenter{\hbox{$#2#3$ }}\kern-.6\wd0}}
\def\dashint{\Xint-}
\newcommand{\R}{\ensuremath{\mathbb{R}}}
\newcommand{\C}{\ensuremath{\mathbb{C}}}
\newcommand{\bea}{\begin{eqnarray*}}
\newcommand{\eea}{\end{eqnarray*}}
\numberwithin{equation}{section}
\newtheorem{theorem}{Theorem}[section]
\newtheorem{proposition}[theorem]{Proposition}
\newtheorem{lemma}[theorem]{Lemma}
\newtheorem{definition}{Definition}
\numberwithin{definition}{section} %\def\thedefinition{\unskip}
\newtheorem{remark}[theorem]{Remark}
\newtheorem{example}[theorem]{Example}
\title[Equivalence between VMO functions and Zero Lelong number functions]{Equivalence between VMO functions and plurisubharmonic functions with zero Lelong numbers}
\keywords{plurisubharmonic functions, VMO functions, Lelong number}
\date{\today}
\subjclass[2010]{32U25, 31C10, 30H35}
\author{S\'everine Biard}
\address{Univ.  Polytechnique Hauts-de-France,  INSA Hauts-de-France,  Ceramaths, FR CNRS 2956, F-59313 Valenciennes.}
\email{severine.biard@uphf.fr}
\author{Jujie Wu}
\address{School of Mathematics (Zhuhai), Sun Yat-Sen University, Zhuhai, Guangdong 519082, P. R.
 	China}
	\email{wujj86@mail.sysu.edu.cn}
\thanks{This study was supported by National Key R\&D Program of China,  No.  2024YFA1015200. The first author is partially supported by the ANR QuaSiDy, grant no ANR-21-CE40-0016. The Second author is also partially supported by the Natural Science Foundation of Guangdong Province, No.  2023A1515030017.}
\begin{document}
\begin{abstract}

We prove that a plurisubharmonic function on a domain in the complex Euclidean space  
is a locally  VMO  (Vanishing Mean Oscillation) function if and only if its Lelong number at each point vanishes. We also give a global version of this result when the boundary of the domain satisfies the \textit{interior sphere condition}. An example emphasizes the importance of this condition. These equivalences contribute to a better understanding of the behavior of singular plurisubharmonic functions. We end the paper by discussing the link between the residual Monge-Amp\`ere mass and VMO functions, by providing examples.
\end{abstract}

\maketitle

\tableofcontents  

\section{Introduction}
  %Introduced by Lelong in 1957 \cite{Lelong57} as density of the measure of a positive closed current at a point, the Lelong number provides a beautiful link between analytical and geometrical objects in complex analysis.
  Any plurisubharmonic function $\varphi$ in a domain $\Omega\subset \mathbb{C}^n$ defines a positive closed $(1,1)$-current $dd^c\varphi$. The Lelong number of $\varphi$ at a point $a\in\Omega$, denoted by $\nu_\varphi(a)$ is then the $(2n-2)$ dimensional density of its Riesz measure $\frac{1}{2\pi}\Delta\varphi$ at $a$.   %defines the Lelong number $\nu_\varphi(a)$. 
  Introduced by Lelong in 1957 \cite{Lelong57}, the Lelong number provides a beautiful link between analytical and geometrical objects in complex analysis. 
 %That is, any plurisubharmonic function $\varphi$ in a domain $\Omega\subset \mathbb{C}^n$ defines a positive closed $(1,1)$-current as $dd^c\varphi$ and the Lelong number of $\varphi$ at a point $a\in \Omega$, denoted $ \nu_\varphi(a)$, is the $(2n-2)$ dimensional density of its Riesz measure $\frac{1}{2\pi}\Delta\varphi$ at $a$.
 This quantity is invariant by local change of coordinates and quantifies the singularities. %, At singularities, the Lelong number helps to quantify them, on the more so, singularities of a plurisubharmonic function.  
%Namely,  { \color{red} For }any plurisubharmonic function $\varphi$ on a domain $\Omega\subset \mathbb{C}^n$ {\color{red} which} is locally integrable, $dd^c\varphi$ defines then a positive and closed $(1,1)$-current and {\color{red}  $\frac{1}{2\pi}\Delta\varphi$ is called its Riesz measure,  where $\Delta\varphi$ is the the Laplace operator of $\varphi$.} {\color{red} \xout{the Laplace operator of $\varphi$,  $\Delta\varphi$,  is called its Riesz measure. } }
%The Lelong number has been first defined as the $(2n-2)$-dimensional density of the Riesz measure at a point $a$, denoted by $\nu_{\varphi}(a)$, helping to quantify the singularities of a plurisubharmonic function. 
%In this direction, Skoda 
In \cite{Skoda1972}, Skoda states a relation between the Lelong number and the integrability of $e^{-\varphi}$,  proving that, if $ \nu_\varphi(a)< 2$, then the function $e^{-\varphi}$ is locally integrable with respect to the Lebesgue measure in a neighborhood of $a$. Applications to pluripotential theory and complex dynamics are then launched and spark interest of numerous mathematicians (see for example \cites{Zeriahi2001, DK2001, BFW2019, Chen2020, BFW2021, Chen2021}). 
Using the complex Monge-Amp\`ere operator defined by Bedford and Taylor, Demailly \cite{Demailly1982} generalizes the Lelong numbers for positive closed $(p,p)$-currents and then develops techniques and results used in algebraic geometry and number theory.  % He proves that if the Lelong number of the plurisubharmonic function $\varphi$ at the point $a$ satisfies the condition $ \nu_\varphi(a)< 2$, then the function $e^{-\varphi}$ is locally integrable with respect to the Lebesgue measure in a neighborhood of $a$. %{\color{magenta}{ Thanks to another definition of the Lelong number {\color{red} in terms of ?} ,  Kiselman \cite{Kiselman1979} proves a converse of Skoda's exponential integrability theorem and estimates the volume of sublevel sets of a plurisubharmonic function on a domain $\Omega\subset \mathbb{C}^n$. {\color{red} What kind of the estimate of the volume of the sublevel set? I download Kilseman's paper and find that he give a short proof about Siu's result on every sublevel set of psh function is an analytic set. Is this kind of estimate you want to mention?} 
 
%{\color{red} \xout{By generalizing the Lelong number using Monge-Amp\`ere operators defined by Bedford and Taylor,  Demailly \cite{Demailly1982} develops techniques and results used in algebraic geometry and number theory.  }} {\color{red} By using the Monge-Amp\`ere operator defined by Bedford and Taylor to generalize Lelong numbers,  Demaily \cite{Demailly1982} develops techniques and results and applies them in algebraic geometry and number theory.} 
 In particular, zero Lelong numbers appeared recently in various areas, such as the dynamics of holomorphic foliations \cite{Nguyen2023} and raise numerous questions \cite{Rash2016, Ra2016} such as its implication in a zero residual Monge-Amp\`ere mass \cite{Li2023}, also called the zero mass conjecture. We present in this paper a refined study of the
properties of plurisubharmonic functions with zero Lelong numbers, applying methods of real analysis in the complex setting and connecting real harmonic analysis notions to the pluripotential theory.  

%For example, zero Lelong numbers raise numerous questions \cite{Ra2016} such as its implication in a zero residual Monge-Amp\`ere mass \cite{Li2023}. They also appear in various areas, such as the dynamics of holomorphic foliations \cite{Nguyen2023}.  
%A refined study of the
%properties of plurisubharmonic functions with zero Lelong numbers would find applications to various problems. Our results go in this direction by
%applying methods of real analysis in the complex setting. 
%Let $\Omega\subset \C^n$ be a bounded domain.  Suppose that $\varphi$ is a plurisuharmonic function on a neighborhood of $\overline{\Omega}$.  
% {\color{red} 
%\xout{ We set }
%\begin{eqnarray} \label{df:uniformll}
%\xout{   \nu (\overline{\Omega} , \varphi) : = \sup \{  \nu (\varphi,a),  a \in \overline{\Omega} \}.  }
%\end{eqnarray}
%\xout{  It follows from the upper semi-continuity property of $(u,a) \rightarrow \nu(u,a)$ that the number $\nu (\overline{\Omega}, \varphi)$ is finite. }
%
%}
% Our results go in this direction in the complex space,  by looking to real analysis.\\
 
\indent A locally integrable function in a domain $\Omega$ in $\mathbb{C}^n$ is a Bounded Mean Oscillation function (BMO) if its mean oscillation, which means the local average of its pointwise deviation from its mean value,  is bounded over the balls of $\Omega$.  These functions were introduced by John and Nirenberg \cite{John-Nirenberg1961} to measure the sets of points where the distance between the function and its mean value is bigger than a positive constant, over subcubes. In the early seventies, D. Sarason \cite{Sarason1975} introduced Vanishing Mean Oscillation (VMO) functions for their applications to stochastic processes. 
A VMO function is a BMO function whose mean oscillation converges to zero when the volume of the balls goes to zero. Although such functions are still extremely useful in harmonic analysis, it is only recent they reach complex variables, thanks to \cites{Chen2020,Chen2021}. Indeed, 
a plurisubharmonic function verifies the mean value inequality, which makes it a natural candidate for BMO functions.
Here, we provide a characterization of the plurisubharmonic VMO functions on domains in $\mathbb{C}^n$ in terms of the following quantity, known as the Lelong number.\\

 %Our first result gives a pointwise equivalence between a zero Lelong number and a VMO function.

%\begin{theorem}\label{tmthm1}
%Let $\Omega\subset \C^n$ be a bounded domain.  Suppose that $\varphi$ is a plurisubharmonic function on $\Omega$,  then $ \nu_\varphi (a)=0$ for any $a\in \Omega$ if and only if $\varphi \in \mathrm{VMO_{loc}}(\Omega )$.
%\end{theorem}

%{\color{blue} Theorem \ref{tmthm1} is our original result.}
%Here $\mathrm{VMO_{loc}}(\Omega )$ stands for the space of locally Vanishing Mean Oscillation functions on $\Omega$ (see Section 2 for a definition). 
%To obtain an equivalence on $\Omega$, we need to enlarge the definition of the Lelong number, as follows:

% ou: To obtain an equivalence on $\Omega$, we need an additional geometric condition (see Definition ?) and to enlarge....
  
\begin{definition}
    Let $\Omega$ be a bounded domain in $\C^n$ and $\varphi$ be a plurisubharmonic function on a neighborhood of $\overline{\Omega}$. Then
\begin{eqnarray*} %\label{df:uniformll}
  \nu_{\varphi} (\overline{\Omega}) : = \sup \{  \nu_\varphi (a)\mid  a \in \overline{\Omega} \}.  
\end{eqnarray*}
\end{definition}
 
Considering the points on the boundary of $\Omega$, 
our result requires the following assumption: $\Omega$ is said to satisfy the \textit{interior sphere condition} (see for example \cite{GT}) if for every $a\in \partial \Omega$,  there exists a ball $B \subset \Omega$ such that $a\in \partial B$. 
%  {\color{blue} I think we don't need the uniform ball $B(z,r)$.  I mean the $ r$ is independent of $a$.  I think maybe we  only need for each boundary point, it has a inner ball condition, it is enough.  }\\

%  {\color{blue} Or maybe we can say:\\
%  
%    The domain $\Omega$ is said to satisfy an interior sphere condition at $a \in \partial \Omega$ if there exists a ball $B\subset \Omega$ with $a\in \partial  B $,  (that is,  the complement of $\Omega$ satisfies an exterior sphere condition at $a$),  (see for exemple \cite{GT}) .} 
%  
 % {\color{red} Vigny [ \cite{Vig2007},  p. 253] Proved that for a real function   $\varphi \in W^{1,2}$ such that there is a bidegree $(1,1)$ closed current $T$ of finite mass on $\Omega$ with  $\partial \varphi \wedge \overline{\partial} \varphi \leq T_\varphi$,  in the sense of currents.  If the Lelong number of $T_\varphi =0$ on $\Omega$,  then $\varphi$ is VMO,  here we will prove} 
 We then prove
%In order to prove Theorem \ref{tmthm1},  first we will show 
 
\begin{theorem}\label{tmthm2}
Let $\Omega$ be a bounded domain in $\C^n$ satisfying the \textit{interior sphere condition} at each point of $\partial \Omega$ and let $\varphi$ be a plurisubharmonic function in a neighborhood of $\overline{\Omega}$.  
Then $\nu_\varphi ( \overline{\Omega}) =0$ if and only if $\varphi$ is \rm{VMO} on $\Omega$.
 \end{theorem}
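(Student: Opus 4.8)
The plan is to pass from $\varphi$ to its Riesz measure $\mu=\frac{1}{2\pi}\Delta\varphi$ and to reformulate both conditions in terms of the normalized density $\Theta(a,s):=\mu(B(a,s))/(c_n s^{2n-2})$, which is nondecreasing in $s$ and decreases to $\nu_\varphi(a)$ as $s\to0^+$; thus $\nu_\varphi(\overline\Omega)=0$ means $\Theta(a,s)\to0$ for every $a\in\overline\Omega$. I would record the standard characterization of VMO on $\Omega$ as the uniform vanishing $\lim_{\delta\to0}\sup\{MO(\varphi,B):B=B(x,r)\subseteq\Omega,\ r\le\delta\}=0$, where $MO(\varphi,B)=\frac{1}{|B|}\int_B|\varphi-\varphi_B|\,dV$ and $\varphi_B=\frac{1}{|B|}\int_B\varphi\,dV$. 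The whole proof then rests on comparing, in both directions, the mean oscillation of $\varphi$ over a ball with the density $\Theta$ at comparable scales.

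To set up the comparison I would use the local Riesz representation $\varphi=U^\mu+h$ on a neighborhood of $\overline\Omega$, where $U^\mu(z)=\int E(z-w)\,d\mu(w)$ is the Newtonian potential ($E(x)=-\kappa_n|x|^{2-2n}$, with the logarithmic kernel when $n=1$) and $h$ is harmonic, hence smooth; since $h$ contributes $O(r)$ to any $MO$ and has zero Lelong number, it may be discarded and everything reduces to estimating $MO(U^\mu,B)$. Splitting $\mu=\mu|_{B(a,2r)}+\mu|_{B(a,2r)^c}$, a direct computation of $\frac{1}{|B(a,r)|}\int_{B(a,r)}\int_{B(a,2r)}|E(z-w)|\,d\mu(w)\,dV(z)$ bounds the near part by $\lesssim\Theta(a,2r)$, while a dyadic gradient estimate on the far part yields the upper bound $MO(U^\mu,B(a,r))\lesssim\sum_{k\ge0}2^{-k}\Theta(a,2^{k+1}r)$. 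Conversely, from the spherical means $\lambda(a,s)=\frac{1}{\sigma(\partial B(a,s))}\int_{\partial B(a,s)}\varphi\,d\sigma$ one has $\lambda(a,r)-\lambda(a,r/2)\gtrsim\Theta(a,r/2)$, and since both shells lie in $B(a,r)$ this produces the lower bound $MO(\varphi,B(a,r))\gtrsim\Theta(a,r/2)$.

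For the implication $\nu_\varphi(\overline\Omega)=0\Rightarrow\mathrm{VMO}$ I would first upgrade the pointwise vanishing of the Lelong numbers to the uniform statement $\sup_{a\in\overline\Omega}\Theta(a,s)\to0$ as $s\to0$. This is a Dini-type argument combining the monotonicity of $\Theta(a,\cdot)$, the upper semicontinuity of $a\mapsto\mu(\overline{B(a,s)})$, and the compactness of $\overline\Omega$: for each $a$ pick $s_a$ with $\Theta(a,s_a)<\varepsilon$, use semicontinuity to get a neighborhood on which this persists at scale $s_a$, and extract a finite subcover to obtain a single threshold $s_0$. Feeding this into the dyadic upper bound---splitting the sum into finitely many small-scale terms controlled by the uniform smallness and a geometric tail controlled by the finite total mass---shows that $MO(\varphi,B)$ is uniformly small over all balls $B\subseteq\Omega$ of small radius, i.e. $\varphi$ is VMO. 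I note that this direction needs no boundary hypothesis.

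For the converse $\mathrm{VMO}\Rightarrow\nu_\varphi(\overline\Omega)=0$, interior points are immediate from the lower bound, since a small ball centered at an interior point lies in $\Omega$. The boundary points are where the \emph{interior sphere condition} enters, and they constitute the main obstacle. Given $a\in\partial\Omega$, I would fix the interior ball $B(a_0,\rho)\subseteq\Omega$ tangent at $a$ and the internally tangent family $B(c_t,t)\subseteq\Omega$ with $c_t=a+\tfrac{t}{\rho}(a_0-a)$ and $a\in\partial B(c_t,t)$, $t\downarrow0$; VMO forces $MO(\varphi,B(c_t,t))\to0$, and the task is to convert this into $\Theta(a,s)\to0$. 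The difficulty is genuine: any ball centered at $a$ protrudes from $\Omega$, so the centered lower bound is unavailable, and the mass responsible for $\nu_\varphi(a)$ may sit on $\partial\Omega$ or just outside it. The heart of the argument is therefore a \emph{one-sided} lower bound $MO(\varphi,B(c_t,t))\gtrsim\nu_\varphi(a)$, obtained by showing that a density $\ge\eta$ of $\mu$ at $a$ renders $U^\mu$ logarithmically singular at the boundary point $a$ of $B(c_t,t)$---exactly as $\log|z-a|$ has scale-invariant, hence nonvanishing, mean oscillation over balls tangent to $a$---the delicate technical point being that the far-field and harmonic contributions do not cancel this singularity. This forces $\nu_\varphi(a)=0$, and the necessity of the hypothesis (a boundary point admitting no interior tangent ball can carry a singularity invisible to $\Omega$) is precisely what the promised example exhibits.
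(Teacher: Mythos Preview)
Your forward implication and your treatment of interior points in the converse follow a genuinely different path from the paper: you work through the Riesz decomposition $\varphi=U^\mu+h$, a dyadic estimate $MO(U^\mu,B(a,r))\lesssim\sum_k 2^{-k}\Theta(a,2^{k+1}r)$, and a Dini-type compactness argument to upgrade pointwise vanishing of $\Theta$ to uniform vanishing. The paper instead controls $MO$ by the upper oscillation $UO$, bounds $UO$ via the Harnack inequality and the convexity of $\log r\mapsto\sup_{B(a,r)}\varphi$, and gets uniformity from a finite cover. Both routes are sound here; yours is closer to classical potential theory, the paper's stays entirely within the plurisubharmonic calculus. Likewise, for interior points of the converse the paper passes through the John--Nirenberg inequality and the integrability index (using Skoda's inequality $\nu_\varphi\le n\,\iota_\varphi$), whereas your direct lower bound $MO(\varphi,B(a,r))\gtrsim\Theta(a,r/2)$ is a legitimate shortcut.

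The boundary step of the converse, however, has a real gap. Your claimed one-sided lower bound $MO(\varphi,B(c_t,t))\gtrsim\nu_\varphi(a)$ is justified only by analogy with the model $\gamma\log|z-a|$, for which scale-invariance does the work. For a general $\varphi$ the Lelong number gives only the one-sided bound $\varphi(z)\le\gamma\log|z-a|+C$, and you have no control on $\varphi$ from below near $a$; since $MO$ is merely subadditive, the oscillation of the ``remainder'' could in principle swamp the contribution from the logarithmic singularity, and your sketch does not rule this out. The paper avoids this difficulty entirely by using John--Nirenberg on the tangent ball $B(z_j,r_j)$: the VMO hypothesis forces $\|\varphi\|_{\mathrm{BMO}(B(z_j,r_j))}<\varepsilon$, hence $e^{-c_n\varphi/(2\varepsilon)}\in L^1(B(z_j,r_j))$, and the inequality $\varphi\le\gamma\log|z-a|+C$ then makes $|z-a|^{-c_n\gamma/(2\varepsilon)}$ integrable on a half-neighborhood of $a$, forcing $c_n\gamma/(2\varepsilon)<2n$. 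This exponential-integrability step is exactly what converts the one-sided pointwise bound into a constraint on $\gamma$; your proposal lacks a substitute for it.
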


We give an example (Example \ref{example1}) of a plurisubharmonic VMO function on a H\"older domain in $\mathbb{C}$ with a nonzero Lelong number, emphasizing the importance of the interior sphere condition. But without any additional geometric assumption on $\Omega$, we obtain a local version.
 
\begin{theorem}\label{tmthm1}
Let $\Omega\subset \C^n$ be a domain.   Suppose that $\varphi$ is a plurisubharmonic function on $\Omega$,  then $ \nu_\varphi (a)=0$ for  every  $a\in \Omega$ if and only if $\varphi$ is locally \rm{VMO} on $\Omega$.
\end{theorem}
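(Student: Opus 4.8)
The plan is to pass to the real-variable picture: identify $\C^n$ with $\R^{2n}$, let $\mu:=\frac{1}{2\pi}\Delta\varphi\ge 0$ be the Riesz measure of the subharmonic function $\varphi$ (only subharmonicity is used), and encode the Lelong number through the normalized mass
\[
\Theta(a,r):=\frac{\mu(B(a,r))}{r^{2n-2}},\qquad a\in\Omega,\ 0<r<\operatorname{dist}(a,\partial\Omega).
\]
By Lelong's monotonicity theorem $r\mapsto\Theta(a,r)$ is nondecreasing, and $\nu_\varphi(a)$ equals, up to a fixed dimensional constant, $\lim_{r\to0^+}\Theta(a,r)=\inf_r\Theta(a,r)$; in particular $\Theta(a,r)\ge c_n\,\nu_\varphi(a)$ for all admissible $r$, and $\nu_\varphi(a)=0$ is equivalent to $\Theta(a,r)\to0$. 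I would also record the Jensen identity for the spherical means $S(a,\rho):=\frac{1}{\sigma_{2n-1}\rho^{2n-1}}\int_{\partial B(a,\rho)}\varphi\,d\sigma$, namely $\frac{d}{d\rho}S(a,\rho)=\Theta(a,\rho)/(\sigma_{2n-1}\rho)$, together with the fact that the ball average $\bar\varphi_{a,R}:=\frac{1}{|B(a,R)|}\int_{B(a,R)}\varphi$ is the $\rho^{2n-1}$-weighted average of $S(a,\rho)$ over $\rho\in(0,R)$. Writing $MO(a,r)$ for the mean oscillation of $\varphi$ over $B(a,r)$, the theorem reduces to comparing $MO(a,r)$ with $\Theta(a,r)$.

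For the implication VMO $\Rightarrow\nu_\varphi\equiv0$ I would prove the clean lower bound $MO(a,2r)\gtrsim\nu_\varphi(a)$, uniformly in small $r$. Writing $A:=\bar\varphi_{a,2r}$ and splitting $B(a,2r)$ into the inner ball $B(a,r)$ and the surrounding shell, the vanishing of $\int_{B(a,2r)}(\varphi-A)$ forces the signed integrals over the two pieces to be opposite, so each contributes at least its absolute value to $\int_{B(a,2r)}|\varphi-A|$; this yields
\[
MO(a,2r)\ \ge\ \frac{2|B(a,r)|}{|B(a,2r)|}\bigl(\bar\varphi_{a,2r}-\bar\varphi_{a,r}\bigr)=2^{1-2n}\bigl(\bar\varphi_{a,2r}-\bar\varphi_{a,r}\bigr).
\]
Expressing the two ball averages through $S(a,\rho)$ and using $\frac{d}{d\rho}S\ge \nu_\varphi(a)/(\sigma_{2n-1}\rho)$ (which is where $\Theta\ge c_n\nu_\varphi(a)$ enters) gives $\bar\varphi_{a,2r}-\bar\varphi_{a,r}\ge c\,\nu_\varphi(a)$ with $c>0$ independent of $r$. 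Hence if $\varphi$ is VMO near $a$, so that $MO(a,2r)\to0$, then $\nu_\varphi(a)=0$. This direction is quantitative and is the easy half.

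For the converse $\nu_\varphi\equiv0\Rightarrow$ locally VMO I would fix a compact $K\subset\Omega$, a ball $B_0=B(x_0,R_0)\supset\supset K$, and use the Riesz decomposition $\varphi=h+p$ on $B_0$, where $h$ is harmonic and $p(x)=\int_{B_0}G_{B_0}(x,y)\,d\mu(y)\le0$ is the Green potential. The harmonic part is smooth, so $MO(h,B(a,r))=O(r)$ uniformly for $a\in K$. For $p$, I would split $\mu=\mu\mathbf 1_{B(a,2r)}+\mu\mathbf 1_{B_0\setminus B(a,2r)}$: the near part is handled by Fubini together with $\frac{1}{|B(a,r)|}\int_{B(a,r)}|x-y|^{2-2n}\,dV(x)\lesssim r^{2-2n}$, giving a contribution $\lesssim\Theta(a,2r)$; the far part is harmonic on $B(a,2r)$, and its oscillation there is controlled, after an integration by parts, by $r\int_{2r}^{R_0}t^{-2}\Theta(a,t)\,dt$. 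A Dini-type splitting of this integral at an intermediate radius $\delta$ shows it tends to $0$ as $r\to0$ once $\Theta(a,t)\to0$. Altogether $MO(a,r)\lesssim o(1)+\Theta(a,2r)+r\int_{2r}^{R_0}t^{-2}\Theta(a,t)\,dt$.

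The main obstacle is the uniformity needed for genuine (local) VMO: the estimate above must be made uniform in $a\in K$, which requires upgrading the pointwise hypothesis $\Theta(a,t)\to0$ to $\sup_{a\in K}\Theta(a,t)\to0$. I would obtain this by a compactness argument: if it failed there would be $a_j\to a_*\in K$ and $t_j\to0$ with $\Theta(a_j,t_j)\ge\varepsilon$; monotonicity gives $\mu(B(a_j,t))\ge\varepsilon\,t^{2n-2}$ for all $t\ge t_j$, and passing to the limit on concentric balls (comparing $B(a_j,t)\subset B(a_*,t')$ for $t'>t$ and using outer regularity of $\mu$) forces $\Theta(a_*,s)\ge 2^{2-2n}\varepsilon$ for all small $s$, i.e.\ $\nu_\varphi(a_*)>0$, a contradiction. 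With this uniform smallness, both the near term and, via the Dini splitting, the far term tend to $0$ uniformly on $K$, establishing that $\varphi$ is VMO on a neighborhood of each point, hence locally VMO. I expect the bookkeeping in the far-field integration by parts and the uniform Dini estimate to be the most delicate computational points, while the conceptual crux is this pointwise-to-uniform upgrade of the vanishing of $\Theta$.
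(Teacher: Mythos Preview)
Your argument is correct, and it takes a genuinely different route from the paper's. Both directions are handled by different machinery.

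For VMO $\Rightarrow\nu_\varphi\equiv0$, the paper goes through the John--Nirenberg inequality and Skoda's theorem: from $\|\varphi\|_{\mathrm{BMO}(B(a,r))}\to0$ one obtains $e^{-c_n\varphi/\varepsilon}\in L^1_{\mathrm{loc}}$ for every $\varepsilon>0$, hence $\iota_\varphi(a)=0$, and then $\nu_\varphi(a)\le n\,\iota_\varphi(a)=0$. Your lower bound $MO(a,2r)\gtrsim \nu_\varphi(a)$ via the ball-average difference is more elementary, uses only subharmonicity, and bypasses the integrability index entirely; in particular it works verbatim for subharmonic functions on $\R^N$.

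For $\nu_\varphi\equiv0\Rightarrow$ VMO, the paper does \emph{not} use the Riesz decomposition. It first proves the pointwise inequality
\[
\mathrm{MO}_{B(a,r)}(\varphi)\ \le\ C_n\Bigl(\sup_{B(a,r)}\varphi-\sup_{B(a,r/2)}\varphi\Bigr)+2\Bigl(\varphi_{\partial B(a,r)}-\varphi_{\partial B(a,e^{-1/(2n)}r)}\Bigr),
\]
obtained from the Harnack inequality and Jensen's inequality, and then exploits the convexity and monotonicity of $\log r\mapsto\sup_{B(a,r)}\varphi$ (and of the spherical means) to show that the right-hand side is small once the slope at scale $r_a$ is small. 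Uniformity over a compact set is achieved by a finite cover by balls $B(a_i,r_{a_i}/2)$ rather than by your sequential contradiction argument on $\Theta$. Your potential-theoretic near/far splitting is equally valid and is closer in spirit to classical Calder\'on--Zygmund arguments; the paper's approach has the advantage of producing the explicit two-term upper bound above (tying into their ``bounded upper oscillation'' framework) and of avoiding any Green-function bookkeeping.

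One small point: you set up the Riesz decomposition on a single ball $B_0\supset\!\supset K$, which need not exist for an arbitrary compact $K\subset\Omega$; but since the conclusion is local, it suffices to do this for $K$ a small closed ball and then cover an arbitrary $\Omega'\Subset\Omega$ by finitely many such balls, exactly as you indicate at the end.
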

 
This theorem helps to construct numerous examples of plurisubharmonic VMO functions (see Section 5). Connecting VMO functions to the zero Lelong number lead us to believe that it will help to solve the zero mass conjecture.  
Indeed, $\mathrm{W^{1,2}_{loc}}$ functions are VMO in the complex plane and $\mathrm{W^{1,2}_{loc}}$ plurisubharmonic functions are in the domain of definition of the complex Monge-Amp\`{e}re operator in $\mathbb{C}^2$. However, we give an example of a plurisubharmonic and locally VMO function in a domain in $\mathbb{C}^2$ that is not in $\mathrm{W^{1,2}_{loc}}$ (see Remark \ref{rmk:notW}).

\indent Before proving Theorem \ref{tmthm2} in Section 5, we recall in Section 2 the definitions of the objects we use, like BMO, VMO functions, and the various definitions of the Lelong number in Section 3, as well as some of its properties and connections to the integrability index. We then give preliminary results connecting the BMO norm and the Lelong number through the John-Nirenberg inequality in Section 4.  We end the paper with Section 6, constructing plurisubharmonic functions that are $\mathrm{W^{1,2}_{loc}}$ and giving, with Remark \ref{rmk:notW}, a plurisubharmonic and locally VMO function in $\mathbb{C}^2$ that is not in the domain of definition of the complex Monge-Amp\`{e}re operator.

\section{BMO and VMO functions}
 
In this paper, $\Omega$ denotes a domain in $\C^n$ as an open and connected set in $\mathbb{C}^n$ and $\mathrm{PSH}(\Omega)$ denotes the set of plurisubharmonic functions on $\Omega$, that are not identically $-\infty$. We denote the mean value of any locally integrable ($L^1_{\rm{loc}}$) function $\varphi$ over a ball %{\color{red} \xout{or a polydisc  $S\subset \Omega$ }  
$B(z, r) \subset \Omega$  with center $z\in\Omega$ and radius $r$ by
 
$$\varphi_{B(z, r)}: =  \frac{1}{|B(z, r)|} \int_{B(z, r) }\varphi.$$
Here,  $|B(z, r)|$ is the Lebesgue measure of $B(z, r)$ and $\displaystyle{\int_{B(z, r)}}$ should be understood as the Lebesgue integral.  In general, we denote $\displaystyle{ \dashint_ { E } \varphi:=\frac{1}{\vert E \vert}\int_{E }\varphi}$ for a non-empty set $E\subset \mathbb{C}^n$. \\

The mean oscillation of $\varphi$ at a point $z\in\Omega$ is defined to be 

\begin{align}
\mathrm{MO}_{B(z,r)}(\varphi)  := \   \dashint_{B(z,r)} |\varphi - \varphi_{B(z,r)}|,
\end{align}
for   $  B(z,r)  \subset \Omega$.  
We say that  $\varphi \in L^1_{\rm{loc}}(\Omega)$  has \textit{Bounded Mean Oscillation} (BMO) on $\Omega$  (or equivalently $\varphi$ is a BMO function on $\Omega$, denoted $ \mathrm{BMO }(\Omega)$)  if  

\begin{align}\label{BMO1}
    \| \varphi\|_{\mathrm{BMO}(\Omega)} :=  \sup_{ B  \Subset \Omega} \mathrm{MO}_{B }(\varphi)    =   \sup_{ B  \Subset \Omega}    \dashint_{B } |\varphi - \varphi_{B }| < \infty,
\end{align}
where the supremum is taken over all balls $B\Subset\Omega$. Some authors \cite{Chen2021} specify that such a function is BMO \textit{with respect to balls} (see Remark \ref{RMBMO}). It will be implied in this paper.   If $\varphi=c$ on $\Omega$, $c\in\mathbb{C}$, then $\Vert \varphi\Vert_{\mathrm{BMO}(\Omega)}=0$, so $\Vert .\Vert_{\mathrm{BMO}(\Omega)}$ is a norm on $L^1_{\rm{loc}}/\{\rm{constants}\}$. In the paper, we will identify a $\mathrm{BMO}$ function $\varphi$ to $\varphi + c$, $c\in\mathbb{C}$ and then refer to $\Vert .\Vert_{\mathrm{BMO}(\Omega)}$ as the $\mathrm{BMO}$ norm on $\Omega$.

According to P. Jones \cite{Jones} (see also \cite{BN1996}, Theorem A1.1), there exists a constant $C>0$ that depends on $n$ and the choice of the norm of $\C^n \cong\mathbb{R}^{2n}$ such that

\begin{align}\label{BMO0}
       \sup_{\tiny{\begin{gathered}
 z\in\Omega,\\  
  B(z,r)\Subset \Omega,\\
 r \leq \frac{1}{2}\mathrm{dist}(z, \partial \Omega)
  \end{gathered}}} \mathrm{MO}_{B(z,r)}(\varphi)%=  \sup_{ z\in\Omega, B(z,r)\Subset \Omega} \frac{1}{|B(z,r)|} \int_{B(z,r)} |\varphi - \varphi_{B(z,r)}| 
  \leq \Vert f\Vert_{\mathrm{BMO}(\Omega)} \leq C\sup_{\tiny{\begin{gathered}
 z\in\Omega,\\  
  B(z,r)\Subset \Omega,\\
 r \leq \frac{1}{2}\mathrm{dist}(z, \partial \Omega)
  \end{gathered}}} \mathrm{MO}_{B(z,r)}(\varphi).  
\end{align}
By a covering argument, we may even take for $r_0>0$, the supremum over the balls $B(z,r)$ with $r\leq \min(r_0, k\mathrm{dist}(z,\partial\Omega))$, for any $0<k<1$, and get the equivalence with the BMO norm (see Corollary A1.1 in \cite{BN1996}). For convenience, we will use the BMO norm defined as follows. 
\begin{align}\label{BMO2}
    \| \varphi\|_{\mathrm{BMO}(\Omega)} :=    \sup_{\tiny{\begin{gathered}
 z\in\Omega,\\  
  B(z,r)\Subset \Omega,\\
 r \leq \frac{1}{2}\mathrm{dist}(z, \partial \Omega)
  \end{gathered}}} \mathrm{MO}_{B(z,r)}(\varphi).%=  \sup_{ z\in\Omega, B(z,r)\Subset \Omega} \frac{1}{|B(z,r)|} \int_{B(z,r)} |\varphi - \varphi_{B(z,r)}| \leq C.  
\end{align}

A BMO function $\varphi$ on $\Omega$ has \textit{Vanishing Mean Oscillation} (VMO) on $\Omega$ (or equivalently $\varphi$ is a VMO function on $\Omega$, denoted $ \mathrm{VMO }(\Omega)$) if  $\varphi \in  \mathrm{BMO } (\Omega)$ and 

 \begin{equation}\label{VMO}
  \lim_{\tiny{\begin{gathered}
  r\rightarrow 0^{+},\\  
 r \leq \frac{1}{2}\mathrm{dist} (z, \partial \Omega)
  \end{gathered}}}    \dashint_{B(z,r)  } |\varphi - \varphi_{B(z,r)  }|  =0  \ \ \   \text{uniformly with respect to $z\in\Omega$}. 
  \end{equation}
More precisely,   (\ref{VMO}) means that for every $\varepsilon >0$,  there exists $\delta>0$ such that,  for all $z\in \Omega$,

$$
 \dashint_{B(z,r)  } |\varphi - \varphi_{B(z,r)  }|   < \varepsilon
$$
for all $r \leq \min\{\delta, \frac{1}{2} \mathrm{dist} (z, \partial \Omega) \}$.

 \begin{remark}\label{RMBMO}
 The functions $\mathrm{BMO }$ have originally been defined for a cube in $\mathbb{R}^n$ with respect to subcubes \cite{John-Nirenberg1961}.  
 However, the definition of  $\mathrm{BMO }$  functions can be generalized for any relatively compact open subsets in a domain $\Omega\subset \mathbb{C}^n$ \cite{Chen2021}.  For more properties about $\mathrm{BMO }$ and $\mathrm{VMO }$,  we refer to \cite{BN1961} and \cite{BN1996}. There  one can also find the characterization of  $\mathrm{VMO }$ functions as those which are in the closure of $C^\infty$ functions with compact support in $\Omega$ with respect to the $\mathrm{BMO }$ norm.
 \end{remark}

Let $\mathrm{BMO_{loc}}(\Omega)$ (resp. $\mathrm{VMO_{loc}}(\Omega)$) be the set of functions on $\Omega$ that are BMO (resp. VMO) on every open set  
 $\Omega_0\Subset\Omega$.  
By using potential theoretic methods,  Brudnyi \cites{Bru1999, Bru2002} showed that each plurisubharmonic function on $\Omega$ is in $\mathrm{BMO_{loc}}(\Omega)$ (see also \cite{Chen2020} for another proof).

In \cite{Chen2021}, Chen and Wang introduced a new family of plurisubharmonic functions, looking at the difference between the function $\varphi$ and $\sup_{B(z,r)}  \varphi$ (its upper oscillation) instead of $\varphi_{B(z,r)}$ (the mean oscillation): A function $\varphi \in L^1_{\rm{loc}} (\Omega)$ has \textit{Bounded Upper Oscillation} (BUO) on $\Omega$  if 
\begin{eqnarray*}
 \|\varphi\|_{\mathrm{BUO}(\Omega)} := \sup_{ B  \Subset \Omega} \mathrm{UO}_{B }(\varphi)   < \infty
\end{eqnarray*}
where 

\begin{align}\label{UO}
\mathrm{UO}_{B(z,r)}(\varphi)  :=  \dashint _{B(z,r)} |\varphi - \sup _{B(z,r)} \varphi | = \sup_{B(z,r)} \varphi - \varphi _{B(z,r)}.
\end{align}

Note that a BUO function is BMO. Like BMO functions, we will analogously use the notation $\mathrm{BUO_{loc}}(\Omega)$.\\

\section{The Lelong number and the integrability index}
Let $\Omega\subset \mathbb{C}^n$ be a domain. It is well known that 
$\rm{PSH}(\Omega)\subset L^1_{\rm{loc}} (\Omega)$. 
The Lelong number of $\varphi$, denoted by $\nu_\varphi$, measures the size of the singularity of a plurisubharmonic function at a point $z\in\Omega$ and can be defined after Kiselman \cite{Kiselman1979} as
$$ \nu_\varphi(z):=\lim_{r\to 0^+} \dfrac{\max_{\vert \zeta-z\vert =r} \varphi(\zeta)}{\log r}.$$

The limit always exists since $\log r  \mapsto \max_{\vert \zeta-z\vert\le r}\varphi(\zeta)$ is convex, non-decreasing, and bounded from above in $\R$ [\cite{GZ2007}, p.54, Corollary 2.29].
From this definition, it follows that $\nu_\varphi(a)=0$ if $\varphi(a)>-\infty$.
Let $z\in\Omega$ and $t<\log \text{dist}(z,\partial\Omega)$.
The Lelong number can also be seen as the slope of the functions

 $$\displaystyle{\lambda(\varphi, z,t):= \dashint_{\mathbb{S}_n}\varphi(z+xe^t)d\sigma(x)}$$
 and
 
 $$\displaystyle{\Lambda(\varphi, z,t):=\sup\lbrace{\varphi(x)\mid x\in B(z,e^{t})}\rbrace}$$ as $t$ tends to $-\infty$, i.e.,

\begin{equation}\label{slope}
 \nu_\varphi(z)=\lim_{t\to -\infty}\dfrac{\lambda(\varphi, z,t)}{t}=\lim_{t\to -\infty}\dfrac{\Lambda(\varphi, z,t)}{t}.
\end{equation}
Here, $\mathbb{S}_n$ is the unit sphere in $\mathbb{C}^n$, $B(z,e^t)$ is the ball of center $z$ and radius $e^t$ and $d\sigma$ is the Lebesgue measure on smooth real hypersurfaces in $\mathbb{C}^n$.

By choosing $r=e^t$ in \eqref{slope}, we get.

\begin{lemma}[\cites{Kiselman1979, Kiselman1994}]\label{lelongnumber}
 Let $\varphi\in\rm{PSH}(\Omega)$ and $a \in \Omega$. Then, 
 
\begin{align*}
 \nu_\varphi(a)= \lim \limits _{r \rightarrow 0^+}   \dfrac{  \dashint _ {  \partial B(a,r) }  \varphi \, d\sigma}   { \log r } =  \lim \limits _{r \rightarrow 0^+}   \dfrac{\dashint _ {  B(a,r) }  \varphi }   { \log r } =  \lim \limits _{r \rightarrow 0^+} \dfrac{ \sup_{B(a,r)}  \varphi } { \log r}.
\end{align*}
\end{lemma}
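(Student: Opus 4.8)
The plan is to derive all three quotients from the two slope identities in \eqref{slope}. Put $r=e^{t}$, so that $t=\log r\to-\infty$ exactly when $r\to0^{+}$. With this substitution the spherical average $\dashint_{\partial B(a,r)}\varphi\,d\sigma$ is precisely $\lambda(\varphi,a,t)$ and the supremum $\sup_{B(a,r)}\varphi$ is precisely $\Lambda(\varphi,a,t)$, while the denominator $\log r$ equals $t$. Dividing and invoking \eqref{slope} therefore gives the first and third equalities at once:
\[
\lim_{r\to0^{+}}\frac{\dashint_{\partial B(a,r)}\varphi\,d\sigma}{\log r}=\lim_{t\to-\infty}\frac{\lambda(\varphi,a,t)}{t}=\nu_\varphi(a)=\lim_{t\to-\infty}\frac{\Lambda(\varphi,a,t)}{t}=\lim_{r\to0^{+}}\frac{\sup_{B(a,r)}\varphi}{\log r}.
\]
The only genuine work is the middle, solid-ball average.

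Writing $g(s):=\dashint_{\partial B(a,s)}\varphi\,d\sigma$ for the spherical average and decomposing the ball integral in polar coordinates (recall $\varphi\in L^{1}_{\mathrm{loc}}$, so every integral below is finite), I would record the identity
\[
\dashint_{B(a,r)}\varphi=\frac{2n}{r^{2n}}\int_{0}^{r}s^{2n-1}g(s)\,ds,
\]
which rewrites the solid mean as a radial average of the spherical means $g(s)$ weighted by $s^{2n-1}$. The first equality, just established, says that $g(s)/\log s\to\nu_\varphi(a)$ as $s\to0^{+}$, so the task reduces to showing that this weighting does not alter the limit.

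To finish, I would fix $\varepsilon>0$ and choose $\delta\in(0,1)$ so that $g(s)=\nu_\varphi(a)\log s+\theta(s)$ with $|\theta(s)|\le-\varepsilon\log s$ for all $s<\delta$. For $r<\delta$ the whole integral lies in this range, and the elementary primitive $\int_{0}^{r}s^{2n-1}\log s\,ds=\tfrac{r^{2n}}{2n}\bigl(\log r-\tfrac1{2n}\bigr)$ yields
\[
\dashint_{B(a,r)}\varphi=\nu_\varphi(a)\log r-\frac{\nu_\varphi(a)}{2n}+E(r),\qquad |E(r)|\le\varepsilon\Bigl(|\log r|+\frac1{2n}\Bigr).
\]
Dividing by $\log r$ and letting $r\to0^{+}$, the constant term is killed by $\log r\to-\infty$ and the error contributes at most $\varepsilon$ in the limit, so $\limsup_{r\to0^{+}}\bigl|\dashint_{B(a,r)}\varphi/\log r-\nu_\varphi(a)\bigr|\le\varepsilon$; since $\varepsilon$ is arbitrary the middle limit is $\nu_\varphi(a)$ as well.

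The step I expect to be most delicate is the last one, because when $\nu_\varphi(a)>0$ the spherical means $g(s)$ tend to $-\infty$, so no crude squeeze by a bounded comparison function is available. The point is rather to use the sharp logarithmic asymptotic $g(s)\sim\nu_\varphi(a)\log s$ together with the integrability of $s^{2n-1}\log s$ near the origin; the integration by parts above is exactly the bookkeeping that shows the $s^{2n-1}$-weighting only shifts the solid mean by the bounded amount $-\nu_\varphi(a)/(2n)$, which is negligible after division by $\log r$. Alternatively, one could sandwich the solid mean using the monotonicity of $g$ in $s$, but controlling its contribution near $s=0$ still forces the same asymptotic estimate.
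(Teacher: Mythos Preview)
Your proposal is correct and follows exactly the route the paper indicates: the paper's entire argument is the one-line remark ``By choosing $r=e^{t}$ in \eqref{slope}'', which yields the spherical-mean and supremum limits immediately, and the lemma itself is attributed to Kiselman. Your additional polar-coordinate computation for the solid-ball average is a legitimate and clean way to fill in the one equality that \eqref{slope} does not literally contain; the paper simply defers that detail to the cited references.
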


We extend the definition of the Lelong number to $\overline{\Omega}$ as follows.

\begin{definition}
    
Let $\Omega$ be a bounded domain in $\C^n$ and $\varphi$ be a plurisubharmonic function on a neighborhood of $\overline{\Omega}$. Then
\begin{eqnarray*} \label{df:uniformll}
  \nu_\varphi (\overline{\Omega}) : = \sup \{  \nu_\varphi(a)\mid  a \in \overline{\Omega} \}.  
\end{eqnarray*}
\end{definition}
By a result from \cite{Siu1974}, $\nu_\varphi (\overline{\Omega})$ is finite.

The integrability index, denoted by $\iota_\varphi$, also measures the singularity of a plurisubharmonic function $\varphi$ at a point $a$ but in terms of the exponential integrability. 
\begin{eqnarray*}\label{integrabelindex}
\iota_\varphi (a) : = \inf \{ r >0 \mid e^{-\frac{2\varphi} {r}} \ \text{is} \ \  L^1  \ \  \text{on a neighborhood of}  \ \  a  \}.
\end{eqnarray*}
If we assume that $\varphi$ is not identically equal to $-\infty$ near $a$, then $\iota_\varphi (a) \in [0, + \infty)$. 
According to Demailly and Koll$\rm{\acute{a}}$r \cite{DK2001},  the following sharp estimate is obtained from Skoda's results \cite{Skoda1972}, relating the Lelong number to the integrability index:
\begin{eqnarray}\label{iota}
  \frac 1n \nu_\varphi(a)  \leq \iota_\varphi (a)  \leq \nu_\varphi(a) .
\end{eqnarray}

\section{BMO functions and Lelong number}

In this section, we connect the Lelong number with the BMO norm. The results are utilized in the proof of Theorem \ref{tmthm2} given in the next section.

We start with a characterization of  VMO  functions in terms of the BMO norm.

\begin{lemma}  \label{LM:BMOEPSILON}
Let $\Omega$ be a domain in $\C^n$.  If $\varphi \in  \mathrm{VMO }(\Omega)$,   then we have 
\begin{eqnarray}
\lim_{r \rightarrow 0} \| \varphi \| _{\mathrm{BMO}(B(a, r))} = 0 \ \ \ \ \text{uniformly with respect to $a\in\Omega$}.
\end{eqnarray} 
\end{lemma}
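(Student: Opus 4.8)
The plan is to unwind the definition of the BMO norm on the small ball and reduce directly to the uniform VMO condition (\ref{VMO}). Fix $a\in\Omega$ and a radius $r>0$, and write $D:=B(a,r)\cap\Omega$ (interpreting $\|\cdot\|_{\mathrm{BMO}(B(a,r))}$ as the norm on $D$ when $B(a,r)\not\subseteq\Omega$), so that, by the normalization (\ref{BMO2}) applied to the domain $D$,
$$\|\varphi\|_{\mathrm{BMO}(B(a,r))}=\sup\ \mathrm{MO}_{B(z,s)}(\varphi),$$
where the supremum runs over all $z\in D$ and all $s\le\tfrac12\,\mathrm{dist}(z,\partial D)$. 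The whole point is that, as $r\to0$, every ball $B(z,s)$ entering this supremum is both small and well inside $\Omega$, so that the VMO estimate applies to each of them uniformly.

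To make this precise I would first establish an elementary distance comparison. Since $D\subseteq\Omega$ we have $D^{c}\supseteq\Omega^{c}$, whence $\mathrm{dist}(z,\partial D)=\mathrm{dist}(z,D^{c})\le\mathrm{dist}(z,\Omega^{c})=\mathrm{dist}(z,\partial\Omega)$; and since $D\subseteq B(a,r)$ we likewise have $\mathrm{dist}(z,\partial D)\le\mathrm{dist}\big(z,B(a,r)^{c}\big)=r-|z-a|\le r$. Consequently every ball $B(z,s)$ occurring in the supremum above satisfies simultaneously
$$s\le\tfrac12\,\mathrm{dist}(z,\partial D)\le\tfrac12\,r\qquad\text{and}\qquad s\le\tfrac12\,\mathrm{dist}(z,\partial D)\le\tfrac12\,\mathrm{dist}(z,\partial\Omega),$$
with center $z\in\Omega$.

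Now I invoke the VMO hypothesis. Given $\varepsilon>0$, let $\delta>0$ be the radius furnished by (\ref{VMO}), so that $\mathrm{MO}_{B(z,s)}(\varphi)<\varepsilon$ for every $z\in\Omega$ and every $s\le\min\{\delta,\tfrac12\,\mathrm{dist}(z,\partial\Omega)\}$. If $r\le\delta$, then for each ball $B(z,s)$ in the supremum we have $s\le\tfrac12 r\le\tfrac12\delta<\delta$ together with $s\le\tfrac12\,\mathrm{dist}(z,\partial\Omega)$, so the VMO estimate gives $\mathrm{MO}_{B(z,s)}(\varphi)<\varepsilon$. Taking the supremum over all admissible balls yields $\|\varphi\|_{\mathrm{BMO}(B(a,r))}\le\varepsilon$, and since $\delta$ was chosen independently of $a$, this bound holds for all $a\in\Omega$ at once; letting $\varepsilon\to0$ gives the claimed uniform limit.

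The argument is essentially a definition chase, so I do not anticipate a genuine obstacle; the only point requiring care is the matching of the two radius constraints, namely that the local constraint $s\le\tfrac12\,\mathrm{dist}(z,\partial D)$ built into the BMO norm on $B(a,r)$ forces both smallness ($s\le\tfrac12 r$) and admissibility for the VMO condition ($s\le\tfrac12\,\mathrm{dist}(z,\partial\Omega)$). Working with $D=B(a,r)\cap\Omega$ rather than $B(a,r)$ is what keeps the comparison valid uniformly in $a$, including for centers $a$ close to $\partial\Omega$ where $B(a,r)$ need not be contained in $\Omega$.
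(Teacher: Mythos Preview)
Your proof is correct and follows essentially the same argument as the paper: unwind the BMO norm on the small ball via (\ref{BMO2}), observe that the constraint $s\le\tfrac12\,\mathrm{dist}(z,\partial B(a,r))$ forces both $s\le\tfrac12\delta$ and $s\le\tfrac12\,\mathrm{dist}(z,\partial\Omega)$, and apply the VMO hypothesis. Your extra care in working with $D=B(a,r)\cap\Omega$ to cover centers $a$ near $\partial\Omega$ is a mild refinement the paper leaves implicit (it tacitly assumes $B(a,r)\subset\Omega$), but the substance of the argument is identical.
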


\begin{proof}
Let $B(a,r) \subset \Omega$.  From \eqref{BMO2}, we have %from \eqref{eq1} and \eqref{eq2}, we have
\begin{eqnarray*}
\| \varphi \| _{ \mathrm{BMO} ( B(a,r))} =\sup _ {{\tiny{\begin{gathered} B(\zeta, t) \Subset  B(a,r),\\ t \leq \frac{1}{2} \mathrm{dist} (\zeta,   \partial B(a,r) )\end{gathered}}} }   \dashint_{B(\zeta,t)  } |\varphi - \varphi_{B(\zeta,  t)  }|.
\end{eqnarray*} 
Since $\varphi \in  \mathrm{VMO }(\Omega)$, then for every $\varepsilon >0$,  there exists $\delta>0$ such that,  for all $a\in \Omega$,
$$
  \dashint_{B(a,r)  } |\varphi - \varphi_{B(a,r)  }|   < \varepsilon
$$
for all $r \leq \min\{\delta,  \frac{1}{2} \mathrm{dist} (a, \partial \Omega) \}$. So, if $r < \delta$,  for every   $B(\zeta, t) \Subset  B(a,r)$ with  $t \leq   \frac{1}{2} \mathrm{dist} (\zeta, \partial B(a,r))$,   we have 
$$
t \leq \frac{1}{2}\mathrm{dist} (\zeta,  \partial B(a,r))   \leq \frac{1}{2} \delta,
$$
and $t \leq \frac{1}{2}   \mathrm{dist} (\zeta, \partial \Omega)$.  Thus,  
$$
  \dashint_{B(\zeta,t)  } |\varphi - \varphi_{B(\zeta,  t)  }|< \varepsilon.
$$
Hence,  for every $\varepsilon >0$,  there exists $\delta>0$ which is independent of $a$,  so that, if $r<\delta$, then
\begin{eqnarray*}
\| \varphi \| _{ \mathrm{BMO} ( B(a,r))} \leq  \varepsilon.
\end{eqnarray*} 
%$C>0$ depending only on $n$.\\

\end{proof}

The following inequality, initially stated over subcubes of $\mathbb{R}^n$ and commonly called John-Nirenberg inequality,  was proved by John and Nirenberg \cite{John-Nirenberg1961} and provides a strong relationship between  BMO  norm and the exponential integrability.

  \begin{lemma}[John-Nirenberg inequality]\label{JNE1}
 
 Let $\Omega$ be a domain in $\mathbb{C}^n$ and %$\varphi\in L^1_{\text{loc}}(\Omega)$ be a $\mathrm{BMO}$ function on $\Omega$. \textcolor{magenta}{
 $\varphi\in L^1_{\rm{loc}}(\Omega)\cap \mathrm{BMO}(\Omega)$. Then there exist $c_n>0$ and $C_n>0$, depending only on $n$ such that 
 \begin{equation}\label{JNE}
   \sup _{B\subset  \Omega}\dashint_B e^{c_n\frac{ 2\vert \varphi-\varphi_B \vert}{\Vert\varphi\Vert_{\mathrm{BMO}(\Omega)}}}\leq C_n,
 \end{equation}
 where the supremum is taken over all the balls $B\subset \Omega$.
 \end{lemma}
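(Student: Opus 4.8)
The plan is to deduce \eqref{JNE} from the sharper \emph{distributional} form of the inequality, namely that there are constants $a_n, b_n > 0$, depending only on $n$, such that for every ball $B \subset \Omega$ and every $\lambda > 0$
\[
  \frac{1}{|B|}\,\bigl|\{\, z \in B : |\varphi(z) - \varphi_B| > \lambda \,\}\bigr| \;\le\; a_n\, e^{-b_n \lambda / \|\varphi\|_{\mathrm{BMO}(\Omega)}}.
\]
Granting this, \eqref{JNE} follows from the layer-cake formula: for a fixed constant $c_n < b_n$ one writes $\dashint_B e^{c_n\, 2|\varphi - \varphi_B|/\|\varphi\|_{\mathrm{BMO}(\Omega)}}$ as an integral of the distribution function against the exponential, and the tail decay above beats the exponential growth of the integrand, producing a finite dimensional bound $C_n$ that is uniform over all balls $B \subset \Omega$. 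Thus the whole content lies in the distributional estimate.

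To obtain it I would run a Calder\'on--Zygmund stopping-time argument. By homogeneity I normalize $\|\varphi\|_{\mathrm{BMO}(\Omega)} = 1$. Fixing $B$ and identifying $\mathbb{C}^n$ with $\mathbb{R}^{2n}$, I pass to dyadic subcubes of a cube comparable to $B$; this replacement is harmless because the ball-averaged and cube-averaged mean oscillations are equivalent up to dimensional constants, which is precisely what \eqref{BMO0} and the covering remark following it provide. Decomposing $\varphi - \varphi_B$ at a fixed dyadic height $\mu > 1$ yields a pairwise disjoint family of maximal subcubes $\{Q_j\}$ satisfying $\mu < \dashint_{Q_j} |\varphi - \varphi_B| \le 2^{2n}\mu$, while $|\varphi - \varphi_B| \le \mu$ almost everywhere off $\bigcup_j Q_j$. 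Summing the left-hand inequality gives $\sum_j |Q_j| \le \mu^{-1}|B|$, and the maximality of the $Q_j$ forces $|\varphi_{Q_j} - \varphi_B| \le 2^{2n}\mu =: \kappa_n$ on each selected cube.

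The core of the argument is then a self-improving recursion. Let $F(\lambda)$ be the supremum, over all balls, of the distribution fraction appearing above. Since $|\varphi - \varphi_B| \le \mu < \kappa_n$ off the cubes, for $\lambda > \kappa_n$ the super-level set $\{|\varphi - \varphi_B| > \lambda\}$ is contained in $\bigcup_j Q_j$; on each $Q_j$ the triangle inequality and the bound on $|\varphi_{Q_j} - \varphi_B|$ give $\{|\varphi - \varphi_B| > \lambda\} \cap Q_j \subset \{|\varphi - \varphi_{Q_j}| > \lambda - \kappa_n\}$, so that
\[
  F(\lambda) \;\le\; \mu^{-1}\, F(\lambda - \kappa_n), \qquad \lambda > \kappa_n.
\]
Iterating this recursion, together with the trivial bound $F \le 1$, produces the geometric decay $F(\lambda) \le a_n\, e^{-b_n \lambda}$ with $b_n = \kappa_n^{-1}\log\mu$, which is the required distributional estimate.

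I expect the principal technical difficulty to be the passage between balls and cubes carried out uniformly: the stopping-time machinery is cleanest on dyadic cubes, whereas the statement and the recursion are phrased for balls, so at each level one must compare averages over a ball with averages over its inscribed and circumscribed cubes without letting the comparison constants accumulate through the iteration. A related point of care is that the recursion for $F$ must hold with the \emph{same} dimensional constants after the reference ball $B$ is replaced by the selected cubes $Q_j$; this is exactly why $F$ is defined as a supremum over all balls, and it is what guarantees that $a_n, b_n$, hence $c_n, C_n$, depend on $n$ alone.
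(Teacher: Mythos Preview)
The paper does not actually prove this lemma: it is stated as the classical John--Nirenberg inequality and attributed to \cite{John-Nirenberg1961}, with no argument supplied. Your sketch is the standard Calder\'on--Zygmund stopping-time proof and is essentially correct, so there is nothing to compare against.

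Two small points of care. First, in the layer-cake step you need $2c_n < b_n$, not $c_n < b_n$, because of the factor $2$ in the exponent of \eqref{JNE}. Second, when you replace the ball $B \subset \Omega$ by a comparable cube, that cube need not lie in $\Omega$, so you cannot directly invoke $\|\varphi\|_{\mathrm{BMO}(\Omega)}$ to control oscillations over it; the clean fix is to observe that $\|\varphi\|_{\mathrm{BMO}(B)} \le \|\varphi\|_{\mathrm{BMO}(\Omega)}$ and run the entire argument inside $B$ (using an inscribed cube, whose dyadic descendants all remain in $B \subset \Omega$), which is exactly the localization the paper later exploits in Lemma~\ref{LM:BMOEPSILON} and Proposition~\ref{co:vmoio}.
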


Since $\rm{PSH}(\Omega)\subset \rm{BMO_{loc}}(\Omega)$, the John-Nirenberg inequality holds on $\rm{PSH}(\Omega)$.  %for any plurisubharmonic function $\varphi$ on $\Omega$.  
Lemma \ref{JNE1}  implies the existence of a positive constant $c_n$ depending only on the dimension $n$ such that for every domain $\Omega$, we have
$$
    e^{-c_n\frac{2\varphi}{ \Vert \varphi\Vert_{\mathrm{BMO}(\Omega )}}}\in     L^1_{\text{loc}}  (\Omega)  .
 $$
By definition, we deduce the following inequality on the integrability index at $a\in\Omega $,
\begin{eqnarray}\label{iotaBMO}
\iota_\varphi (a)  \leq  \frac{ \| \varphi \| _{ \mathrm{BMO}(\Omega) }}{ c_n} ,
\end{eqnarray}
 where $c_n>0$ only depends on the dimension $n$. 

Combining \eqref{iotaBMO} obtained by Lemma \ref{JNE1} on $B(a,r)\subset \Omega$ with Lemma \ref{LM:BMOEPSILON} and \eqref{iota}, we get. 

\begin{proposition}\label{co:vmoio}
Let $\Omega\subset \C^n$ be a bounded domain and $\varphi\in \rm{PSH}(\Omega)$.  If $\varphi \in  \mathrm{VMO }(\Omega)$, then $ \nu_\varphi (a)=0$ for any $a\in \Omega$.
\end{proposition}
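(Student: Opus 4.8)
The plan is to chain together the three quantitative ingredients assembled just before the statement, applied on small balls rather than on all of $\Omega$. Fix $a\in\Omega$ and choose $r_0>0$ with $B(a,r_0)\Subset\Omega$. For $0<r\le r_0$, apply the John--Nirenberg bound \eqref{iotaBMO} to the restriction of $\varphi$ to the ball $B(a,r)$: this yields
\begin{equation*}
\iota_\varphi(a)\le \frac{\|\varphi\|_{\mathrm{BMO}(B(a,r))}}{c_n},
\end{equation*}
where $c_n$ depends only on $n$ and, crucially, \emph{not} on $r$. The left-hand side $\iota_\varphi(a)$ is a fixed number, independent of $r$, since the integrability index is defined by the behavior of $\varphi$ near the single point $a$.

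The next step is to let $r\to 0^+$ and invoke Lemma~\ref{LM:BMOEPSILON}. Since $\varphi\in\mathrm{VMO}(\Omega)$ by hypothesis, that lemma gives $\|\varphi\|_{\mathrm{BMO}(B(a,r))}\to 0$ as $r\to 0$ (indeed uniformly in $a$, though pointwise suffices here). Feeding this into the displayed inequality forces $\iota_\varphi(a)\le 0$. Combined with $\iota_\varphi(a)\ge 0$ (valid because $\varphi$ is plurisubharmonic, hence not identically $-\infty$ near $a$, as noted after the definition of the integrability index), we conclude $\iota_\varphi(a)=0$.

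Finally, I would use the left inequality in the comparison \eqref{iota} between the Lelong number and the integrability index, namely
\begin{equation*}
\frac{1}{n}\,\nu_\varphi(a)\le \iota_\varphi(a).
\end{equation*}
Since $\iota_\varphi(a)=0$ and the Lelong number is nonnegative, this gives $\nu_\varphi(a)=0$. As $a\in\Omega$ was arbitrary, the proposition follows.

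I do not expect a serious obstacle, since all the hard analytic work is packaged in the lemmas I am allowed to cite; the proof is essentially a three-line combination. The one point demanding care is the \emph{uniformity} of the constant $c_n$ across the shrinking balls $B(a,r)$: the argument collapses if the John--Nirenberg constant degenerated as $r\to 0$. This is precisely why Lemma~\ref{JNE1} was stated with $c_n$ depending only on the dimension, so I would make explicit that the same $c_n$ applies on every ball $B(a,r)\subset\Omega$, after which taking $r\to 0$ is immediate.
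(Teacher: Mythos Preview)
Your proof is correct and follows essentially the same approach as the paper's own argument: apply \eqref{iotaBMO} on shrinking balls $B(a,r)$, use Lemma~\ref{LM:BMOEPSILON} to send the right-hand side to zero, conclude $\iota_\varphi(a)=0$, and then invoke \eqref{iota} to obtain $\nu_\varphi(a)=0$. Your added remark on the uniformity of $c_n$ in $r$ is a worthwhile clarification.
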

 
\begin{proof}
Since $\varphi\in{\mathrm{VMO}}(\Omega)$, by Lemma \ref{LM:BMOEPSILON}, we deduce that $\lim_{r\to 0^+}\Vert \varphi\Vert_{\mathrm{BMO}(B(a,r))}=0$ for any ball $B(a,r)\subset \Omega$.
By Lemma \ref{JNE1}, we obtain \eqref{iotaBMO} on $B(a,r)$: there exists $c_n>0$ such that for all $a\in\Omega$,
$$\iota_\varphi (a)  \leq  \frac{ \| \varphi \| _{ \mathrm{BMO}(B(a,r)) }}{ c_n}.
$$
When $r$ tends to $0^+$, the right-hand side of the previous inequality tends to $0$, then $\iota_\varphi (a)=0$ and by \eqref{iota}, it implies that
$\nu_\varphi(a)=0$.
\end{proof}

\section{Proof of Theorem \ref{tmthm2} and Theorem  \ref{tmthm1}}

The proof of Theorem \ref{tmthm2} relies on ideas from \cite{Chen2021}, based on Harnack inequality (see for example p.37 in \cite{Dem97}) and convex analysis that we recall below.  % Let $B(a,r)= \{ z\in \C^n : \Vert z-a\Vert < r\} $ be the Euclidian ball with center $a$ and radius $r$.

\begin{lemma}[Harnack inequality]\label{supu}
Let $\varphi $ be a subharmonic function on $\Omega \subset \C^n$ and $\bar{B}(0,r) \subset \Omega$. 
If $\varphi \leq 0$ on  $\bar{B}(0,r) $,  then for all $x\in B(0,r)$
\begin{eqnarray}
\varphi (x)  
 \leq   \frac{r ^{2n -2 } (r- |x|)}{ (r+|x|)^{2n-1}}   \dashint_ {  \partial B(0,r) }  \varphi \, d\sigma.
\end{eqnarray}
 
\end{lemma}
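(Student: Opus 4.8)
The plan is to compare $\varphi$ with the harmonic function on $B(0,r)$ sharing its boundary values and then to estimate the Poisson kernel, exploiting the sign hypothesis at the decisive step. Viewing $\C^n$ as $\R^{2n}$, I would start from the fact that a subharmonic function lies below the Poisson integral of its boundary trace: for $x \in B(0,r)$,
\[
\varphi(x) \;\leq\; \int_{\partial B(0,r)} P_r(x,\zeta)\,\varphi(\zeta)\,d\sigma(\zeta),
\]
where $P_r$ denotes the Poisson kernel of the ball $B(0,r)$. This is the only place subharmonicity enters; it follows, for instance, from the Poisson--Jensen formula $\varphi(x) = \int_{\partial B(0,r)} P_r(x,\zeta)\varphi(\zeta)\,d\sigma(\zeta) - \int_{B(0,r)} G_r(x,y)\,d\mu_\varphi(y)$ together with the nonnegativity of the Green function $G_r$ and of the Riesz measure $\mu_\varphi$.

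Next I would record the explicit kernel in $\R^{2n}$, namely $P_r(x,\zeta) = \frac{r^2 - |x|^2}{r\,\omega_{2n}\,|x-\zeta|^{2n}}$ for $\zeta \in \partial B(0,r)$, where $\omega_{2n} = |\partial B(0,1)|$ is the area of the unit sphere and $|\partial B(0,r)| = \omega_{2n}\,r^{2n-1}$. For $x \in B(0,r)$ and $\zeta \in \partial B(0,r)$ the triangle inequality gives $|x-\zeta| \leq r + |x|$, so, factoring $r^2 - |x|^2 = (r-|x|)(r+|x|)$,
\[
P_r(x,\zeta) \;\geq\; \frac{(r-|x|)(r+|x|)}{r\,\omega_{2n}\,(r+|x|)^{2n}} \;=\; \frac{r-|x|}{r\,\omega_{2n}\,(r+|x|)^{2n-1}} \;=:\; m(x) \;>\; 0.
\]
The decisive point is the hypothesis $\varphi \leq 0$: since $\varphi(\zeta) \leq 0$ and $P_r(x,\zeta) \geq m(x) > 0$, one has $P_r(x,\zeta)\varphi(\zeta) \leq m(x)\,\varphi(\zeta)$ pointwise, and integrating over $\partial B(0,r)$ turns the lower bound on the kernel into an upper bound on the integral:
\[
\varphi(x) \;\leq\; m(x)\int_{\partial B(0,r)} \varphi\,d\sigma \;=\; m(x)\,\lvert \partial B(0,r)\rvert \dashint_{\partial B(0,r)} \varphi\,d\sigma.
\]

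To conclude I would simplify the constant: $m(x)\,\lvert\partial B(0,r)\rvert = \frac{r-|x|}{r\,\omega_{2n}\,(r+|x|)^{2n-1}}\cdot \omega_{2n}\,r^{2n-1} = \frac{r^{2n-2}(r-|x|)}{(r+|x|)^{2n-1}}$, which is exactly the coefficient in the statement; the inequality direction is preserved because $\dashint_{\partial B(0,r)}\varphi\,d\sigma \leq 0$. I expect the main (and essentially only) obstacle to be the careful justification of the initial domination inequality: one needs that $\varphi$, being upper semicontinuous and locally integrable, has an integrable boundary trace on $\partial B(0,r)$ and that its Poisson integral genuinely dominates it. This is classical, and once it is granted the rest reduces to the elementary kernel estimate above, whose single subtlety is to use the sign of $\varphi$ so that replacing $P_r$ by its minorant $m(x)$ produces an upper, rather than a lower, bound.
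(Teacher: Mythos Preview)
Your argument is correct and is the standard proof of this Harnack-type inequality: dominate the subharmonic $\varphi$ by the Poisson integral of its boundary values, bound the Poisson kernel from below via $|x-\zeta|\le r+|x|$, and use $\varphi\le 0$ to reverse the inequality when replacing the kernel by its minorant. The computation of the constant is accurate.

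Note, however, that the paper does not supply its own proof of this lemma: it is stated with a reference to Demailly's \emph{Complex Analytic and Differential Geometry} (p.~37) and used as a black box. So there is no in-paper argument to compare against; your write-up is essentially the classical derivation one would find in that reference. The only point you flag as needing care---the domination $\varphi(x)\le \int_{\partial B(0,r)} P_r(x,\zeta)\varphi(\zeta)\,d\sigma(\zeta)$---is unproblematic here because the hypothesis $\bar B(0,r)\subset\Omega$ ensures $\varphi$ is subharmonic on a neighborhood of the closed ball, hence upper semicontinuous and integrable on $\partial B(0,r)$, and the Poisson--Jensen formula (or the maximum principle applied to $\varphi$ minus its Poisson integral) gives the domination directly.
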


We also need the following lemma, that may be seen as Jensen's inequality.

%{\color{blue} I agree with you that we can put something like :  this might be seen as a Jensen inequality.} 

\begin{lemma}[Lemma 3.0.2 in \cite{Chen2021}]\label{barycenter}
 Let $d\mu$ be a probability measure on a Borel measurable subset $S$ in $\mathbb{R}^n$ with
barycenter $\hat{t}$. Let $g$ be a convex function on $\mathbb{R}^n$. Then
\begin{eqnarray}
 \int _S g d \mu \geq g( \hat{t}).
\end{eqnarray}
\end{lemma}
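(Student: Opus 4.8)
The plan is to prove this by the standard supporting-hyperplane argument underlying Jensen's inequality. First I would unwind the hypotheses: since $d\mu$ is a probability measure with barycenter $\hat t$, by definition $\hat t=\int_S t\,d\mu(t)$ and $\int_S d\mu=1$. The very existence of the barycenter forces $\int_S |t|\,d\mu(t)<\infty$, so every affine function of $t$ is $\mu$-integrable on $S$; this will be what makes the lower bound below finite and meaningful.

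The key step is to produce a supporting affine minorant of $g$ at the point $\hat t$. Because $g$ is a finite convex function on all of $\mathbb{R}^n$, it is continuous and its subdifferential is nonempty at every point; hence there exists a subgradient $v\in\mathbb{R}^n$ of $g$ at $\hat t$ with
\[
g(t)\ \ge\ g(\hat t)+\langle v,\,t-\hat t\rangle\qquad\text{for all }t\in\mathbb{R}^n .
\]
I would then integrate this inequality against $d\mu$ over $S$. Using that $\mu$ is a probability measure together with the defining property of the barycenter, namely $\int_S (t-\hat t)\,d\mu(t)=\hat t-\hat t=0$, the right-hand side integrates to exactly $g(\hat t)$, so that $\int_S g\,d\mu\ge g(\hat t)$, which is the claim.

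The only genuinely delicate point, and the one I expect to be the main (though standard) obstacle, is the existence of the subgradient $v$: it rests on the fact that a convex function finite on all of $\mathbb{R}^n$ admits a supporting hyperplane at every point, i.e.\ the geometric Hahn--Banach / supporting-hyperplane theorem applied to the epigraph of $g$. A secondary technical care concerns integrability: if $\int_S g\,d\mu=+\infty$ the inequality is trivially true, while in the finite case the affine minorant is $\mu$-integrable thanks to $\int_S|t|\,d\mu<\infty$, so the chain of inequalities stays finite and legitimate. No further estimates are required.
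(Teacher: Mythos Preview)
Your argument is correct: this is exactly the standard subgradient proof of Jensen's inequality, and the integrability caveats you flag are the right ones. The paper itself does not supply a proof of this lemma; it simply quotes it from \cite{Chen2021} with the remark that it ``may be seen as Jensen's inequality,'' so there is no alternative approach to compare against.
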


\begin{lemma}
Let $\Omega$ be a domain in $\C^n$ and $B(a,r) \subset  \Omega$.  If $\varphi\in L^1_{\rm{loc}}(\Omega)$, then
   \begin{align}\label{eq:mo}
 \mathrm{MO}_{B(a,r)}(\varphi)  \leq &  2  \left(\dfrac{3^{2n-1}}{ 2^{2n-2}}\right) \left(\sup _{B(a,r)} \varphi - \sup_{B(a,\frac{1}{2}r)}\varphi\right)  +2\left ( \varphi_ { \partial B(a, r)}  -   \varphi_ { \partial B\left(a,e^{-\frac{1} { 2n } }r \right)} \right).
\end{align}
 
\end{lemma}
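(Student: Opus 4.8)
The plan is to bound the mean oscillation first by twice the upper oscillation, then to decompose that upper oscillation into a ``sup versus sphere-average'' part controlled by Harnack's inequality (Lemma~\ref{supu}) and a ``sphere-average versus ball-average'' part controlled by convexity and Jensen's inequality (Lemma~\ref{barycenter}). Normalize $a=0$ and abbreviate $M=\sup_{B(0,r)}\varphi$ and $\phi(s)=\varphi_{\partial B(0,s)}=\dashint_{\partial B(0,s)}\varphi\,d\sigma$. (Although the statement only asks for $\varphi\in L^1_{\mathrm{loc}}$, the supremum/sphere quantities and the two cited lemmas require $\varphi$ to be (pluri)subharmonic, which I assume throughout.) The first, elementary, step records $\mathrm{MO}_{B(0,r)}(\varphi)\le 2\,\mathrm{UO}_{B(0,r)}(\varphi)=2\bigl(\sup_{B(0,r)}\varphi-\varphi_{B(0,r)}\bigr)$: for any constant $c$ one has $\dashint_B|\varphi-\varphi_B|\le \dashint_B|\varphi-c|+|c-\varphi_B|\le 2\dashint_B|\varphi-c|$, and choosing $c=\sup_B\varphi$ turns the right-hand side into $2(\sup_B\varphi-\varphi_B)$ because $\varphi\le\sup_B\varphi$; this is the remark following \eqref{UO}. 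The factor $2$ appearing in front of both terms of the claim is exactly this factor.

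The key step is to prove $\varphi_{B(0,r)}\ge \varphi_{\partial B(0,e^{-\frac{1}{2n}}r)}$, which is what produces the exotic radius $e^{-\frac{1}{2n}}r$. Writing the ball average in polar coordinates gives $\varphi_{B(0,r)}=2n\int_0^1\phi(r\tau)\tau^{2n-1}\,d\tau$, and the substitution $\tau=e^v$ recasts this as $\int_{-\infty}^0\psi(\log r+v)\,d\mu(v)$, where $\psi(u)=\phi(e^u)$ and $d\mu(v)=2n\,e^{2nv}\,dv$ is a probability measure on $(-\infty,0]$. A direct integration by parts yields the barycenter $\hat v=\int_{-\infty}^0 v\,d\mu(v)=-\tfrac{1}{2n}$. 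Since $\varphi$ is plurisubharmonic, $\psi(u)=\phi(e^u)$ is convex in $u$ (this is the convexity in the $\log$-radius underlying \eqref{slope} and Lemma~\ref{lelongnumber}), so applying Lemma~\ref{barycenter} to $v\mapsto\psi(\log r+v)$ gives $\varphi_{B(0,r)}\ge \psi(\log r+\hat v)=\phi(e^{-\frac{1}{2n}}r)=\varphi_{\partial B(0,e^{-\frac{1}{2n}}r)}$. Together with the first step and the trivial splitting, this produces $\mathrm{MO}_{B(0,r)}(\varphi)\le 2\bigl(\sup_{B(0,r)}\varphi-\varphi_{\partial B(0,r)}\bigr)+2\bigl(\varphi_{\partial B(0,r)}-\varphi_{\partial B(0,e^{-\frac{1}{2n}}r)}\bigr)$, the last bracket being precisely the second term of the claim.

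It then remains to control $\sup_{B(0,r)}\varphi-\varphi_{\partial B(0,r)}$ by Harnack. Applying Lemma~\ref{supu} to $\varphi-M\le 0$ on $\bar B(0,r)$ gives, for $|x|\le \tfrac{r}{2}$, the bound $\varphi(x)-M\le \tfrac{r^{2n-2}(r-|x|)}{(r+|x|)^{2n-1}}\bigl(\varphi_{\partial B(0,r)}-M\bigr)$. As $\varphi_{\partial B(0,r)}-M\le 0$, taking the supremum over $|x|\le \tfrac{r}{2}$ replaces the coefficient by its minimum over that range; the coefficient decreases in $|x|$, so the minimum is attained at $|x|=\tfrac{r}{2}$ and equals $\tfrac{2^{2n-2}}{3^{2n-1}}$. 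Rearranging the resulting (negative) inequality gives $\sup_{B(0,r)}\varphi-\varphi_{\partial B(0,r)}\le \tfrac{3^{2n-1}}{2^{2n-2}}\bigl(\sup_{B(0,r)}\varphi-\sup_{B(0,r/2)}\varphi\bigr)$, which is the first term of the claim. Chaining the three estimates yields the stated inequality.

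I expect two points to demand care. The conceptual crux is the barycenter computation: recognizing $\varphi_{B(0,r)}$ as a $\mu$-average of $\log$-radius sphere means and obtaining $\hat v=-\tfrac{1}{2n}$ is exactly what singles out the radius $e^{-\frac{1}{2n}}r$, and it depends on the convexity of $\psi$, hence genuinely on plurisubharmonicity rather than mere local integrability. The fiddly technical point is the open-versus-closed-ball supremum in the Harnack step, since $\sup_{\bar B(0,r)}\varphi$ may strictly exceed $\sup_{B(0,r)}\varphi$; I would circumvent this by applying the Harnack estimate on each $\bar B(0,\rho)$ with $\rho<r$ and $M_\rho=\sup_{\bar B(0,\rho)}\varphi$, then letting $\rho\to r^-$, using $M_\rho\to\sup_{B(0,r)}\varphi$, $\sup_{B(0,\rho/2)}\varphi\to\sup_{B(0,r/2)}\varphi$, and the continuity of $\phi$ in the radius.
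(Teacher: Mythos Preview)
Your proof is correct and follows essentially the same route as the paper: the bound $\mathrm{MO}\le 2\,\mathrm{UO}$, the split $\mathrm{UO}=I_1+I_2$ with $I_1=\sup_{B(0,r)}\varphi-\varphi_{\partial B(0,r)}$ and $I_2=\varphi_{\partial B(0,r)}-\varphi_{B(0,r)}$, Harnack applied to $\varphi-\sup_{B(0,r)}\varphi$ for $I_1$, and the Jensen/barycenter argument with the probability measure $d(e^{2nt})$ on $(-\infty,0)$ for $I_2$. Your remarks that the argument genuinely requires (pluri)subharmonicity rather than mere local integrability, and that the open-versus-closed supremum in the Harnack step should be handled by a limit $\rho\to r^-$, are both valid and in fact more careful than the paper's presentation.
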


  \begin{proof}
We may assume that $a=0$.  We remark that for any $B(0,r)\subset \Omega$,
\begin{eqnarray} \label{eq:uomo}
\mathrm{MO}_{B(0,r)}(\varphi) & = &  \dashint_{B(0,r) }  |\varphi - \varphi_{B(0,r) }|  \\ \nonumber
&= &  \dashint _{B(0,r)}  |\varphi - \sup_{ B(0,r) }  \varphi + \sup_{B(0,r) }  \varphi -  \varphi_{B(0,r) }| \\ \nonumber
& \leq  &  \dashint_{B(0,r)}  |\varphi - \sup_{B(0,r)}  \varphi |  \\  \nonumber
&& +  \dashint_{B(0,r) }  |\sup_{B(0,r) }  \varphi -  \varphi_{B(0,r) }| \\ \nonumber
&=   &  2 \left ( \sup_{B(0,r) }  \varphi -   \dashint_{B(0,r) }   \varphi \right )\\ \nonumber 
& =& 2  ( \sup_{B(0,r)} \varphi - \varphi_{B(0,r) }) \\ \nonumber 
&= & 2 \mathrm{UO}_{B(0,r)}(\varphi) .  
\end{eqnarray}  
We decompose now $\mathrm{UO}_{B(0,r)}(\varphi)$ in terms that we can estimate by the Lelong number of $\varphi$ at $0$.
\begin{eqnarray} \label{UO1}
 \mathrm{UO}_{B(0,r)}(\varphi)  &=& \sup_{B(0,r)} \varphi -  \varphi_{ \partial B(0,r)} +  \varphi_{ \partial B(0,r)}  -  \varphi_{B(0,r)}\\   \nonumber
&: = & I_1 + I_2.
\end{eqnarray}
By using Lemma \ref{supu} for $\Psi:= \varphi - \sup _{B(0,r)} \varphi $, we get that 
\begin{eqnarray*} 
\sup  _{B(0,\frac 1 2 r)} \Psi & \leq &\frac 1 2\cdot \left (\frac 2 3\right)^{2n-1}  \Psi_{\partial B(0,r)} .
\end{eqnarray*}
That is,
\begin{align*}
\sup_{B(0,\frac{1}{2}r)} \Psi= \sup_{B(0,\frac{1}{2}r)}\varphi - \sup _{B(0,r)} \varphi &\leq\frac 1 2\cdot \left (\frac 2 3\right)^{2n-1}  ( \varphi _{\partial B(0,r)}- \sup _{B(0,r)} \varphi). 
%&=\frac 1 2\cdot \left (\frac 2 3\right)^{2n-1}(   \varphi _{\partial B(0,r)}- \sup _{B(0,r)} \varphi). \\
\end{align*}
So,
\begin{align}\label{eq:boundi1}
I_1 = \sup _{B(0,r)} \varphi - \varphi_{\partial B(0,r)} &\leq \left(\dfrac{3^{2n-1}}{ 2^{2n-2}}\right) \left(\sup _{B(0,r)} \varphi - \sup_{B(0,\frac{1}{2}r)}\varphi\right).
\end{align}

Now we estimate $I_2$. Set 
\begin{eqnarray} \label{for:bycen}
g(t):= \dashint _{ \partial B(0,e^t r)} \varphi d \sigma.  
\end{eqnarray}
We have 
\begin{eqnarray*}
\varphi _{B(0,r) }  &  =  & \dashint_{ B(0,r)}    \varphi  \\
&=& \frac{1 }{ |B(0,r)|} \int^0 _{-\infty}  \int _{\partial B(0,e^tr)} \varphi d \sigma d (e^t r )  \\
& =&  \frac{1 }{ |B(0,r)|}  \int_{-\infty}^0   |\partial B(0,r)|  g(t) d (e^t r ),   \ \  \  \text{ by}  \ \  ( \ref{for:bycen} ) \\
& =&  \int ^0 _{-\infty}   g(t) d (e^{2nt}).
\end{eqnarray*}
Since $g$ is convex and $d(e^{2nt}) $ is a probability measure on $(-\infty,0)$ with barycenter at $-\frac{1}{2n}$, it follows from Lemma \ref{barycenter} that 
$$
\int ^0 _{-\infty}   g(t) d (e^{2nt}) \geq g \left (- \frac{1} {2n} \right).
$$
Thus, we have 
\begin{eqnarray}  \label{eq:estimatei2}
I_2 & =  & \varphi_{ \partial B(0,r)}  -  \varphi_{B(0,r)}\\  \nonumber
& =& g(0) - \int_ {-\infty} ^0 g(t) d(e^{2nt} )\\  \nonumber
& \leq& g(0) - g\left(-\frac{1}{2n} \right)\\  \nonumber
& =&   \varphi_ { \partial B(0 , r)}  -   \varphi_ { \partial B\left(0,e^{-\frac{1} { 2n } }r \right)}. \nonumber
\end{eqnarray}
Combining  \eqref{eq:uomo}, \eqref{UO1}, \eqref{eq:boundi1}, and \eqref{eq:estimatei2}, we get  (\ref{eq:mo}).
\end{proof}

\begin{proof}[Proof of Theorem \ref{tmthm2}]

($ \Longrightarrow$ )  Let $U$ be a neighborhood of $\overline{\Omega}$ and  $\varphi\in\rm{PSH}(U)$. Rescaling $U$ if necessary, we may assume that there exists $V\Subset   U$ such that $\text{dist}(\overline{\Omega} , \partial V)>1$, 
so that for every $z\in \overline{\Omega}$, $B(z, 1) \subset \overline{V}$. We may also assume $\max  \limits_{\overline{V}} \varphi <-1$.

Since $\nu_\varphi(\overline{\Omega}) =0$, by definition and Lemma \ref {lelongnumber}, for every $a \in \overline{\Omega}$ and $\varepsilon>0$, there exists $0< r_a< \frac{1}{2}$
 \begin{eqnarray} \label{eq: lelong}
 \frac{ \sup \limits_ {B(a, \frac{r_a}{2}) }    \varphi} {\log \frac{r_a}{2}} < \varepsilon  \ \ \text{and}  \ \ \   \frac{ \sup \limits_ {\partial B(a, e^{-\frac{1}{2n} } r_a) }    \varphi} {\log (e^{-\frac{1}{2n} } r_a)} < \varepsilon.
 \end{eqnarray}

 The idea of the proof is to bound $\mathrm{MO}_{B(z,r) }(\varphi)$ uniformly from above with respect to $r$, for any $z\in{\Omega}$ such that $B(z,r)\Subset \Omega$. We then decompose $\mathrm{MO}_{B(z,r) }(\varphi)$ in terms of $\sup_{B(z,r)} \varphi$ by first fixing $z\in B(a,r_a)$ and taking a covering of $\overline{\Omega}$ by those balls.

First, note that for every $z\in \overline{\Omega}$, we have
 \begin{eqnarray*}\label{ineq:bijj}
  \frac{\sup \limits_ {B(z,r) } \varphi } {\log r} =  \frac{  \sup \limits_ {B(z,r) } \varphi    - \sup \limits_ {B(z,1) } \varphi   }{ \log r- \log 1 } + \frac{\sup \limits_ {B(z,1) } \varphi}{ \log r}.
  \end{eqnarray*}
  
Since $r  \rightarrow  \sup \limits _{B(0,r)} \varphi $ is convex and nondecreasing with respect to $\log r$, the slopes of $\log r  \mapsto \sup \limits_ {B(z,r) } \varphi $ are therefore nondecreasing and positive. Hence, the first right-hand term is also nondecreasing with respect to $r$, and since $\sup \limits_ {B(z,1) } \varphi $ is bounded, so is the second right hand term.  We then deduce that $ \frac{\sup \limits_ {B(z,r) } \varphi } {\log r}$ and   
%\begin{eqnarray}\label{ineq:increasing}
$\sup \limits_ {B(z,r)} \varphi    - \sup \limits_ {B(z,\frac{r}{2}) } \varphi $
%\end{eqnarray}
 are nondecreasing with respect to $r$. Similarly, we see that  $\frac{ \sup \limits_ {\partial B(z, e^{-\frac{1}{2n} } r) }    \varphi} {\log (e^{-\frac{1}{2n} } r)} $ and 
$ \displaystyle{ \sup \limits_ {\partial B(z,r ) } \varphi  - \sup \limits_ {\partial B(z, e^{-\frac{1}{2n} } r) }\varphi}$  are  nondecreasing with respect to $r$. Hence, for every $0< r\leq r_a$,
\begin{eqnarray*} \label{ineq:non-decra}
  \frac{\sup \limits_ {B(z,\frac{r}{2} ) } \varphi } {\log r} =   \frac{\sup \limits_ {B(z,\frac{r}{2}) } \varphi } {\log \frac{r}{2}}\cdot \frac{\log \frac{r}{2}}{ \log r } \leq   \frac{\sup \limits_ {B(z,r) } \varphi } {\log r} \cdot \frac{\log \frac{r}{2}}{ \log r } ,
\end{eqnarray*}
we get 
\begin{eqnarray}\label{eq: danzeng}
 \sup \limits_ {B(z,r ) } \varphi  \leq \sup \limits_ {B(z,\frac{r}{2} ) } \varphi  \cdot  \frac{ \log r }{\log \frac{r}{2}}. \end{eqnarray}
 Now, let $z\in B(a, \frac{r_a}{2})\cap\,\overline{\Omega}$ with $r_a  $ satisfying \eqref{eq: lelong}.   We have $B(a, \frac{r_a}{2}) \subset B(z, r_a)$, so
 \begin{eqnarray*}
 \sup \limits_ {B(z,r_a) } \varphi  \geq \sup \limits_ {B(a,\frac{r_a}{2}) } \varphi. 
\end{eqnarray*}
 We deduce the following. 
\begin{align} \label{eq:daxiao}
  \frac{\sup \limits_ {B(z,r_a) } \varphi } {\log r_a} &\leq   \frac{\sup \limits_ {B(a ,\frac{r_a}{2}) } \varphi } {\log r_a}  =  \frac{\sup \limits_ {B(a ,\frac{r_a}{2}) } \varphi } {\log \frac{r_a}{2} }  \cdot   \frac{\log \frac{r_a}{2} } {\log r_a}\nonumber\\
  &<\varepsilon \cdot \frac{\log \frac{r_a}{2}  }{\log r_a}   = \left(1+ \frac{ \log 2 }{ - \log r_a}  \right) \varepsilon \nonumber\\
  &\leq 2 \varepsilon 
\end{align}
by (\ref{eq: lelong}).
For every $z\in B(a, \frac{r_a}{2})\cap\overline{\Omega}$ and $r \leq   r_a  $, we have
 \begin{eqnarray*}
 \sup \limits_ {B(z,r ) } \varphi  - \sup \limits_ {B(z,\frac{r}{2}) } \varphi & \leq & \sup \limits_ {B(z,r_a ) } \varphi  - \sup \limits_ {B(z,\frac{r_a}{2}) } \varphi     %\ \  \ \text{ (by \ \ \ref{ineq:increasing})}    
   \\ \nonumber 
& \leq & \sup \limits_ {B(z,\frac{r_a}{2}) } \varphi  \cdot  \left(   \frac{\log r_a}  {\log \frac{r_a}{2 } } -1\right) , \ \  \ \text{ (by \ \ \ref{eq: danzeng})}   \\ \nonumber 
&=&  \log 2  \cdot  \frac{\sup \limits_ {B(z,\frac{r_a}{2}) } \varphi } {\log \frac{r_a}{2} }  \\ \nonumber 
& <& (2 \log 2)    \varepsilon,   \ \  \ \text{ (by \ \ \ref{eq:daxiao})} . 
 \end{eqnarray*}
 Using the same idea, we have 
  \begin{eqnarray*}
 \sup \limits_ {\partial B(z,r ) } \varphi  - \sup \limits_ {\partial B(z, e^{-\frac{1}{2n} } r) } \varphi & \leq & \sup \limits_ {\partial B(z,r_a )} \varphi  - \sup \limits_ {\partial B(z, e^{-\frac{1}{2n} } r_a) } \varphi   \\ \nonumber 
& \leq & \sup \limits_ {\partial B(z, e^{-\frac{1}{2n} } r_a) } \varphi  \cdot  \left(   \frac{\log r_a}  {\log (e^{-\frac{1}{2n} } r_a) }-1\right)   \\ \nonumber  
&<& \frac{1}{2n}  \varepsilon.
 \end{eqnarray*}
 Hence, for every $z\in B(a, \frac{r_a}{2})\cap\,\overline{\Omega}$ and $r \leq \min \{  r_a,  \text{dist} (z, \partial \Omega) \}$,  we get from (\ref{eq:mo}) 
 \begin{eqnarray*} \label{eq:bijiao}
\mathrm{MO}_{B(z,r)}(\varphi) & \leq & 2  \left(\dfrac{3^{2n-1}}{ 2^{2n-2}}\right) \left(\sup _{B(z,r)} \varphi - \sup_{B(z,\frac{1}{2}r)}\varphi\right)  \\ \nonumber   
&& + 2 \left(  \varphi_ { \partial B(z , r)}  -   \varphi_ { \partial B\left(z,e^{-\frac{1} { 2n } }r \right)} \right)\\ \nonumber   
&<&   2  \left(\dfrac{3^{2n-1}}{ 2^{2n-2}}\right)     \cdot (2 \log 2) \varepsilon +  \frac{1}{n}  \varepsilon   \\ \nonumber   
& <& (  2^{2n+3}+1  ) \varepsilon.
\end{eqnarray*}

 We choose a finite covering of $\overline{\Omega}$ by balls  $B(a_i, \frac{r_{a_i}}{ 2} ) $, $1\leq i\leq m$, where $a_i\in \overline{\Omega}$. Denote $r_0 :=\min  \limits _{1\leq  i \leq m} \{ r_{a_i}  \}  $. Hence, for any $z\in \Omega$ and
%,  for every $z\in \Omega$ ( that must be in some $B(a_i, \frac{r_{a_i}}{ 2} )$), if 
$r < \min \{r_0 ,   \frac{1}{2}\text{dist} (z, \partial \Omega)    \}$, 
 
 \begin{eqnarray*} \label{eq:bijiao1}
\mathrm{MO}_{B(z,r)}(\varphi) & <&  ( 2^{2n+3}+1  ) \varepsilon.
\end{eqnarray*}
 We deduce that $\varphi\in \rm{VMO}(\Omega)$.\\

$(\Longleftarrow)$ Let $\varphi \in \mathrm{VMO}(\Omega)$.  We need to prove that $\nu_\varphi(\overline{\Omega}) =0$, that is, for every $a\in \overline{\Omega}$,   $\nu_\varphi(a)=0$. Due to Proposition \ref{co:vmoio}, it is sufficient to prove that  $\nu_\varphi(a)=0$ for every $a\in \partial \Omega$.

Let $a\in \partial\Omega$.  Take a sequence $\{z_j \} \subset \Omega$ such that $z_j \rightarrow a$.  
 Since $\varphi\in \mathrm{VMO}(\Omega)$,   by Lemma \ref{LM:BMOEPSILON},  for every $\varepsilon>0$,  there exists $r_0>0$ such that if $j \gg 1$, $r_j = \mathrm{dist} (z_j, \partial\Omega) < r_0$,  
\begin{eqnarray}\label{eq:vmoepsi2}
\| \varphi\|_{\mathrm{BMO}(B(z_j,r_j))  } <\varepsilon.
\end{eqnarray}
On the other hand,  for every $j$ fixed, applying John-Nirenberg inequality (\ref{JNE}) to $B(z_j, r_j)$, we have for every $B\subseteq B(z_j, r_j)$,
\begin{eqnarray} \label{ineq:uji2}
  \dashint_{B }  e^{c_n\frac{\left\vert \varphi-\varphi_{B } \right\vert}{2\Vert\varphi\Vert_{\mathrm{BMO}(B(z_j, r_j))}}}<  \infty.
\end{eqnarray}
Since $\left\vert \varphi-\varphi_{B(z_j, r_j) }\right\vert \geq |\varphi | - | \varphi_{B(z_j, r_j) } | = -\varphi + \varphi_{B(z_j, r_j) } $ for $B=B(z_j,r_j)$, we have from (\ref{eq:vmoepsi2}) and (\ref{ineq:uji2}),
 \begin{eqnarray}\label{eq:gujivaresi}
 \int_{B(z_j, r_j) }e^{-c_n\frac{  \varphi }   {2 \varepsilon }}   & \leq &  \int_{B(z_j, r_j) }  e^{ -\frac{c_n} {2\varepsilon} \varphi_{B(z_j, r_j) }}e^{c_n\frac{\left\vert \varphi-\varphi_{B(z_j, r_j) } \right\vert}{2 \varepsilon }}<  \infty. 
\end{eqnarray}

Since $\Omega$ satisfies the interior sphere condition and $z_j\to a$, we may assume that $a\in \partial B(z_j, r_j)$ for some $j\gg1$. Denote $\nu_\varphi(a) = \gamma$.  
By definition of the Lelong number of $\varphi$ at $a$,  there exist $r_a>0$ and a constant $C$, so that for every $z\in B(a, r_a)$,
$$
\varphi(z) \leq \gamma \log |z-a|  + C.
$$
Then (\ref{eq:gujivaresi}) implies that
 \begin{eqnarray*}
 \int_{B(z_j, r_j) }  \frac{ 1}   {|z-a| ^{ \frac{c_n} {2\varepsilon} \gamma }}  & \leq & e^ { \frac{-C\cdot c_n } {2 \varepsilon} }   \int_{B(z_j, r_j) }e^{-c_n\frac{  \varphi }   {2 \varepsilon }}   <\infty,  %\ \  \text{by} \ \ (\ref{eq:gujivaresi} ) \\ \nonumber   
\end{eqnarray*}
So, 
$ \frac{c_n} {2\varepsilon}   \gamma <2n $ i.e., %so $\gamma < \frac{4n}{c_n} \varepsilon$.
% Hence,   
$\nu_\varphi(a) \leq  \frac{4n}{c_n} \varepsilon $ for every $\varepsilon>0$.   
\end{proof}

\begin{remark}
The interior sphere condition is only required for the backward direction. As defined, the radius of the ball centered in $z_j$ might tend to $0$ as we are close to $a$. This geometric assumption is enough to avoid cusps pointing outwards the domain; however, they might point inward.
\end{remark}

Without any geometric assumption on the boundary of $\Omega$, we may deduce a weaker equivalence, which is Theorem \ref{tmthm1}.

\begin{proof} [Proof of Theorem \ref{tmthm1}]
$(\Longrightarrow)$  Assume that $\nu_\varphi(a)=0$ for every  $a\in\Omega$.
We only need to prove that for every $\Omega'\Subset \Omega$,  $\varphi \in \mathrm{VMO}(\Omega')$.

Let $\Omega'\Subset \Omega$ be any relatively compact subset. By assumption, $\nu_\varphi(\overline{\Omega'}) = 0$. 
Then, by Theorem \ref{tmthm2}, $\varphi \in \mathrm{VMO}(\Omega')$.

$(\Longleftarrow)$ By assumption, $\varphi\in\rm{VMO}(B)$ for any ball $B\Subset \Omega$. Let $a\in \Omega$, we can find a ball $B$ centered in $a$, contained in $\Omega$. By Proposition \ref{co:vmoio},
we conclude that $\nu_\varphi(a)=0$.
\end{proof}

\begin{remark}
    Since the Lelong number is defined locally, Theorem \ref{tmthm1} holds for any, not necessarily bounded, domain $\Omega\subset \mathbb{C}^n$. 
\end{remark}

We give below an example of a domain with H\"older boundary on which Theorem \ref{tmthm2} does not hold: there exists a (pluri)subharmonic VMO function $\varphi$ on $\Omega$ such that $\nu_\varphi(\overline{\Omega})=1$. This is because the interior sphere condition is not automatically guaranteed for H\"older domains.

\begin{example}\label{example1}
Let $\Omega : = \{x+iy \in\C\mid 0<x<1,  \vert y\vert <(1-x)^\alpha  \}$, $\alpha>1$. 
Then $\log |z-1| \in\mathrm{VMO}(\Omega)$, but $\nu_{\log|z-1|}(1)  = 1$.
\end{example}
Since $\log|z-1| $ is harmonic on $\Omega$, for every $a\in \Omega$ and for every ball $B(a,r) \Subset \Omega$ we have 
$$
\dashint_{B(a,r)} \log |z-1|= \log |a-1|.
$$ 
Now we let 
$$\Omega_1: = B(a,r) \cap B(1, |1-a|), \ \Omega_2: = B(a,r) \setminus  \bar{B}(1, |1-a|).$$ 
\begin{eqnarray*}
\int_{B(a,r)} \left\vert \log|z-1| - \log |a-1| \right\vert &=& \int_{\Omega_1} ( \log |a-1| - \log |z-1|)  \\ \nonumber   
&& + \int_{\Omega_2} ( \log |z-1| - \log |a-1|) \\ \nonumber   
  & =  &    (|\Omega_1| - |\Omega_2|) \log |a-1|  +  \int_{\Omega_2}  \log |z-1|    \\ \nonumber   
&&- \int_{\Omega_1}  \log |z-1| .
\end{eqnarray*}
On the other hand, for every  $z\in B(a,r)$,  $|a-1| -r \leq |z-1| \leq |a-1|+ r$,  so we have
\begin{eqnarray*}
  \log |a-1| + \log \left(1-\frac{r}{|a-1|}\right) \leq \log |z-1| \leq \log |a-1| + \log \left(1+ \frac{r} { |a-1|} \right) .
\end{eqnarray*}
Hence,
\begin{eqnarray*}
\int_{B(a,r)} \left\vert \log|z-1| - \log |a-1| \right\vert  & \leq   &    (|\Omega_1| - |\Omega_2|) \log |a-1|  +  |\Omega_2| \log |a-1|    \\ \nonumber   
  &  & +  |\Omega_2|  \log \left(1+ \frac{r} { |a-1|} \right)    -  |\Omega_1|   \log |a-1|    \\ \nonumber   
&&- |\Omega_1 |  \log (1- \frac{r} { |a-1|} ) \\ \nonumber   
  & =&  |\Omega_2|  \log \left (1+ \frac{r} { |a-1|} \right)   - |\Omega_1 |  \log \left(1- \frac{r} { |a-1|} \right). \\ \nonumber   
\end{eqnarray*}
So, 
\begin{eqnarray*}
  \dashint_{B(a,r)}  \left\vert \log|z-1| - \log |a-1|  \right\vert & \leq   &  \frac{  |\Omega_2| }{  |\Omega_1 |  +    |\Omega_2| } \log \left(1+ \frac{r} { |a-1|} \right)   \\ \nonumber   
&& - \frac{ |\Omega_1 | }{  |\Omega_1 |  +    |\Omega_2| }  \log \left (1- \frac{r} { |a-1|} \right)  \\ \nonumber   
  & \leq   &  \log \left (1+ \frac{r} { |a-1|} \right) -  \log \left(1- \frac{r} { |a-1|} \right)\\ \nonumber   
 % & =   &  \log\frac{ 1+ \frac{r} { |a-1|}} { 1- \frac{r} { |a-1|} }\\ \nonumber   
  & =   &  \log\frac{  |a-1| + r} {  |a-1|-r }\\ \nonumber   
  & \leq   &  \log\frac{  |a-1| +  |a-1| ^\alpha } {  |a-1|- |a-1| ^\alpha }  \\ \nonumber %\ \ \  ( r\leq \text{dist}(a, \partial\Omega) \leq |a-1|^\alpha  )  
  &  =  &  \log\left (  \frac{1+ |a-1| ^{\alpha-1}  }{ 1- |a-1| ^{\alpha-1}} \right)
  \\ \nonumber 
  &\rightarrow &  0 \ \ \  ( a \rightarrow 1),
\end{eqnarray*}
for $ r\leq \text{dist}(a, \partial\Omega) \leq |a-1|^\alpha $.
Thus, for a given $\varepsilon >0$, there exists $\delta >0$, such that  for every $a\in \Omega$ satisfying  $|a-1|\leq  \delta$, and for $0< r\leq \delta^\alpha $,
\begin{eqnarray*}
\mathrm{MO}_{B(a,r)}( \log |z-1| ) < \varepsilon.  %\ \ \    \text{for every } \ \   \ \  z \in B(a, r)
\end{eqnarray*}
Denote  by $A = \{ a \in \Omega\mid  |a-1| \leq \delta \} $ and $B = \{ a \in \Omega\mid  |a-1| >   \delta  \}$.
Since $\log |z-1| $ is uniformly continuous on $\overline{B}$, $\log |z-1|\in\rm{VMO}(B)$.  That is, there exists $r_0>0$ such that for every $a\in B$ and $r < r_0$, we have 
\begin{eqnarray*}
\mathrm{MO}_{B(a,r)}( \log |z-1| ) < \varepsilon.  %\ \ \    \text{for every } \ \   \ \  z \in B(a, r)
\end{eqnarray*}
Hence, for every $a\in A\cup B$ and $r < \min\{  r_0,      \delta^\alpha  \}$,
\begin{eqnarray*}
\mathrm{MO}_{B(a,r)}( \log |z-1| ) < \varepsilon.  %\ \ \    \text{for every } \ \   \ \  z \in B(a, r)
\end{eqnarray*}
We conclude that $\log |z-1|\in\rm{VMO}(\Omega)$.% is VMO on $\Omega$.

Combining the following lemma with Theorem \ref{tmthm2}, we can construct local  VMO  functions in any domain $\Omega\subset
\mathbb{C}^n$ from a plurisubharmonic function with zero Lelong number.

\begin{lemma}\label{compLelong}
Let $\varphi\in \rm{PSH} (\Omega)$ and $\chi$ be a convex increasing function in some interval containing $\varphi (\Omega)\cup\lbrace{-\infty}\rbrace$.  If $\chi'(-\infty) =0$ then $\nu_{\chi \circ \varphi} \equiv0$ on $\Omega$. 
\end{lemma}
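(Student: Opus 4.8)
The plan is to show directly that the Lelong number of $\chi\circ\varphi$ vanishes at every point $a\in\Omega$. First I would record that $\chi\circ\varphi\in\mathrm{PSH}(\Omega)$, since a convex nondecreasing function of a plurisubharmonic function is plurisubharmonic; this makes $\nu_{\chi\circ\varphi}(a)$ well defined and lets me invoke Lemma \ref{lelongnumber}. Writing $N(r):=\sup_{B(a,r)}\varphi$ and using that $\chi$ is nondecreasing and continuous, I have $\sup_{B(a,r)}(\chi\circ\varphi)=\chi(N(r))$, so that
$$
\nu_{\chi\circ\varphi}(a)=\lim_{r\to0^+}\frac{\chi(N(r))}{\log r},\qquad \nu_\varphi(a)=\lim_{r\to0^+}\frac{N(r)}{\log r}.
$$

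Next I would split into two cases. If $\varphi(a)>-\infty$, then $N(r)\downarrow\varphi(a)$ as $r\to0^+$ by upper semicontinuity, so $\chi(N(r))$ stays bounded while $\log r\to-\infty$, and the ratio tends to $0$ (consistent with the fact, noted after Kiselman's definition, that $\nu_\varphi(a)=0$ whenever $\varphi(a)>-\infty$). If $\varphi(a)=-\infty$, then $N(r)\to-\infty$, hence $N(r)<0$ for all small $r$, and I factor
$$
\frac{\chi(N(r))}{\log r}=\frac{\chi(N(r))}{N(r)}\cdot\frac{N(r)}{\log r}.
$$
The second factor tends to $\nu_\varphi(a)$, which is finite by Siu's theorem (finiteness of Lelong numbers, \cite{Siu1974}), so it suffices to prove that the first factor tends to $0$, i.e. that $\lim_{s\to-\infty}\chi(s)/s=0$.

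The heart of the argument, and the step I expect to be the main obstacle, is precisely this scalar limit, where the hypothesis $\chi'(-\infty)=0$ enters. I would fix a reference point $s_0$ in the domain of $\chi$ and write, using that a convex function is the integral of its nondecreasing, nonnegative right derivative $\chi'_+$,
$$
\frac{\chi(s)}{s}=\frac{\chi(s_0)}{s}+\frac{1}{|s|}\int_s^{s_0}\chi'_+(u)\,du\qquad(s<s_0<0).
$$
Given $\varepsilon>0$, since $\chi'_+(u)\to0$ as $u\to-\infty$ I choose $S<s_0$ with $0\le\chi'_+(u)<\varepsilon$ for $u\le S$; splitting the integral at $S$ and dividing by $|s|$ bounds the right-hand side by $\varepsilon\,\frac{|s|+S}{|s|}+\frac{C}{|s|}$, whose $\limsup$ as $s\to-\infty$ is $\varepsilon$, and letting $\varepsilon\to0$ gives the claim. (Equivalently, one can avoid integration and use only the two-sided secant estimates $\chi'_+(s)(s_1-s)\le\chi(s_1)-\chi(s)\le\chi'_+(s_1)(s_1-s)$, letting first $s\to-\infty$ and then $s_1\to-\infty$.) The only delicate points are the standard convex-analysis facts about $\chi'_+$ and keeping track of signs, since both $s$ and $\log r$ are negative; once $\lim_{s\to-\infty}\chi(s)/s=0$ is established, combining it with the finiteness of $\nu_\varphi(a)$ in the second case and with the trivial first case yields $\nu_{\chi\circ\varphi}(a)=0$ for every $a\in\Omega$, that is $\nu_{\chi\circ\varphi}\equiv0$ on $\Omega$.
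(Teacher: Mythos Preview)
Your proof is correct and follows essentially the same strategy as the paper's: both split according to whether $\varphi$ is finite or $-\infty$ at $a$ (the paper phrases Case~1 as ``bounded below near $a$'', but your version via $N(r)\to\varphi(a)$ from upper semicontinuity works just as well), and in the singular case both extract from $\chi'(-\infty)=0$ that the contribution of $\chi$ is sublinear, giving $\nu_{\chi\circ\varphi}(a)\le\varepsilon\,\nu_\varphi(a)$ for every $\varepsilon>0$. The only cosmetic difference is that you isolate the scalar fact $\chi(s)/s\to 0$ and prove it via the integral of $\chi'_+$, whereas the paper applies the subgradient inequality $\chi(\varphi(z))\ge\chi(s)+c_s(\varphi(z)-s)$ pointwise and divides by $\log|z-a|$; also, invoking Siu for the finiteness of $\nu_\varphi(a)$ is unnecessary, since this is immediate from the convexity and monotonicity of $t\mapsto\Lambda(\varphi,a,t)$ recalled just before Lemma~\ref{lelongnumber}.
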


\begin{proof}
We distinguish two cases depending on the nature of the point $a\in\Omega$: % $\nu _{\chi \circ \varphi } (a)$ by considering two cases: 

\textit{Case 1:} Assume that $\varphi$ is bounded from below near $a$.\\
Since $\chi$ is increasing, $\chi \circ \varphi  (a)$ is also bounded from below. Because $\log|z-a| \rightarrow - \infty$ as $z\to a$, we get 
 $$
  \frac{\chi \circ \varphi(z) }{ \log |z-a|}  \rightarrow 0 \ \ \  \text{as} \ \ \ \ z\rightarrow a.
 $$
Hence,  $\nu _{\chi \circ \varphi } (a) =0$.
 
\textit{Case 2:} Assume that $\lim\inf_{z\to a}\varphi(z)= - \infty$.\\
By the definition of the Lelong number, 
$$
\nu _{  \varphi} (a)    =   \liminf_{ z \rightarrow a} \frac{  \varphi(z) }{ \log |z-a|} \geq 0.
$$
Because $\chi$ is convex, the subgradient inequality applied to $\chi$ at $\varphi(z)$ implies,  for $z$ close enough to $a$,
$$
\chi(\varphi(z)) \geq \chi  (s) + c_s (\varphi(z) -s),
$$
where $s$ is a fixed point in the domain of $\chi$,  and $c_s$
 is the subgradient of $\chi$ at $s$.  Since $\chi'(-\infty) =0$,  for any $\varepsilon>0$ and for $z$ close enough to $a$, we can choose $s$ such that $c_s < \varepsilon$. %for all $\varphi(z) <s.$
 Thus, for $z$ close to $a$,
 $$
 \chi(\varphi(z)) \geq \chi  (s) + \varepsilon (\varphi(z) -s).
 $$
 Dividing by $\log |z-a|$, we get
 $$
  \frac{\chi \circ \varphi(z) }{ \log |z-a|} \leq   \frac{\chi (s) -\varepsilon s }{ \log |z-a|}  + \varepsilon  \frac{  \varphi(z) }{ \log |z-a|}.
 $$
 Taking liminf as $z\rightarrow a$,
 $$
 \nu _{\chi \circ \varphi } (a)  \leq  \varepsilon  \nu _{ \varphi } (a) .
 $$
Since $ \varepsilon >0$  is arbitrary, %and $ \nu _{ \varphi } (a) \geq 0$ {\color{red} is always finite}, 
we deduce
 $$
 \nu _{\chi \circ \varphi } (a)  \leq 0 .
 $$
But since $\chi$ is convex and increasing,  $\chi \circ \varphi$ is plurisubharmonic and %its Lelong number is non-negative,

 $$
 \nu _{\chi \circ \varphi } (a)  \geq 0 .
 $$
 Thus,  $\nu _{\chi \circ \varphi } (a) =0$.

 In both cases,  we have  $\nu _{\chi \circ \varphi } (a) =0$ for any $a\in \Omega$.  Therefore,  $\nu _{\chi \circ \varphi } (a) \equiv 0$ on $\Omega$.
 
\end{proof}

By choosing various $\chi$ in Lemma \ref{compLelong}, we get examples of  locally VMO functions.\\

\begin{example}\label{example2}  Let $\varphi$ be a negative plurisubharmonic function in $\Omega\subset \C^n$.\\
  %For $\chi(x)=-\frac 1 x$, respectively $-\log (-x)$, $- (-x)^\alpha,  0 \leq \alpha <1$ separately,  combining  Theorem \ref{tmthm1} we get that 
\begin{enumerate}
\item  $- \dfrac{1}{\varphi}   \in \mathrm{VMO_{\rm{loc}}}(\Omega )$ (for $\chi(x):=-\frac 1 x$);
\item $- \log (-\varphi) \in \mathrm{VMO_{\rm{loc}}}(\Omega )$ (for $\chi(x):=-\log (-x)$);
\item  $- (-\varphi)^\alpha \in \mathrm{VMO_{\rm{loc}}}(\Omega )$,  $0< \alpha <1$ (for $\chi(x):= - (-x)^\alpha$, $ 0 < \alpha <1$). \\\medskip
\end{enumerate}

In particular, by taking $\varphi = \log \vert f\vert$ where $f$ is a holomorphic function on $\Omega$ such that $|f|<\frac 1 e $,  we deduce  
%\begin{example}
 \begin{enumerate}
\item $- \frac{1} {\log|f|}   \in \mathrm{VMO_{\rm{loc}}}(\Omega )$;
\item $ - \log (- \log |f|)    \in \mathrm{VMO_{\rm{loc}}}(\Omega )$;
 \item $  - (-\log |f|)^\alpha \in \mathrm{VMO_{\rm{loc}}}(\Omega )$, $0 < \alpha<1$.
\end{enumerate} 
\end{example}

\section{Sobolev space $\rm{W_{loc}^{1,2}}(\Omega)$  and  $ \rm{VMO_{loc}}(\Omega)$}

 The generalization of Lelong numbers in terms of positive closed currents, developed in \cite{Dem93}, have woven connections between the theory of the complex Monge-Amp\`ere operators and  Lelong numbers. 
The definitions of  both objects are extended in a way that makes wedge products of positive closed currents well-defined.
In this section, we construct plurisubharmonic functions for which the wedge product of the associated positive current is well-defined. We also give an example of a $\mathrm{VMO_{\text{loc}}}$  function for which the above wedge product is not well-defined, showing an obstacle to relate the zero mass conjecture to  VMO functions.\\

Let $\mathrm{W ^{k,p}}(\Omega) $ ( resp.  $\mathrm{W_{\text{loc}}^{k,p}}(\Omega) $ ) be the Sobolev space of all functions on a domain $\Omega\subset \R^{2n}$ whose  weak  derivatives of order $\leq k$ are in $ L  ^p (\Omega)$ (  resp. $ L_{\text{loc}}^p  (\Omega)$ ), for some $k\in \mathbb{Z}^+,  p\geq 1$.\\
 Let $\mathrm{W_0^{k,p}}(\Omega)$ be the completion of $C^\infty_0(\Omega)$ with respect to the $\mathrm{W^{k,p}}(\Omega)$-norm.

If $\varphi$ is a smooth plurisubharmonic function defined on an open subset of $\C^n$, the positive $(n,n)$-current $(dd^c \varphi)^n$ defines a nonnegative Radon measure, expressed in local coordinates as follows
\begin{align*}
(dd^c \varphi)^n & = dd^c \varphi \wedge \cdots \wedge  dd^c \varphi \\
&= c \det \left( \frac{\partial^2 \varphi}{ \partial z_j \partial \overline{z}_k} \right)idz_1\wedge d\bar{z}_1\wedge\dots\wedge idz_n\wedge d\bar{z}_n,
\end{align*}
where $d = \partial + \overline{\partial}$,  $d^c = \frac{i}{\pi} (  \overline{\partial} -\partial )$ and $c>0$ is a normalizing constant.
The \textit{complex Monge-Amp\`ere operator} refers to the non-linear partial differential operator $\varphi\mapsto \det \left( \frac{\partial^2 \varphi}{ \partial z_j \partial \bar{z}_k} \right)$.  \\

An example given by Shiffman and Taylor (see Appendix 1 in \cite{Siu1975}) shows that the complex  Monge-Amp\`ere operator cannot be well-defined as a non-negative Radon measure for an arbitrary plurisubharmonic function. However,   $(dd^c \varphi)^n$  can be well-defined if $\varphi$ is plurisubharmonic
and locally bounded (see \cite{BT1982}). The local boundedness can be relaxed by looking at the dimension of the \textit{unbounded locus} of $\varphi$, that is, the set of points $x$ where $\varphi$ is unbounded in a neighborhood of $x$. Demailly \cite{Dem93} proves that it is enough to assume that the unbounded locus is relatively compact in the domain of the definition of $\varphi$. \\

Let $\Omega\subset \mathbb{C}^n$ be an open set. Denote by $\mathcal{D} (\Omega)$ the set of functions in $\mathrm{PSH}(\Omega)$ for which the  complex  Monge-Amp\`ere operator is well-defined. 

 If $n=1$, the complex Monge-Amp\`ere operator is the linear Laplace operator. If $\varphi$ is a subharmonic function that is not identically equal to $-\infty$ on any connected component of $\Omega$, then $dd^c\varphi$ defines a positive Borel measure and belongs to  $\mathcal{D}(\Omega)$ (see, for example, \cites{DGZ2016,GZ2007}).% \textcolor{magenta}{reference?}\\

If $n=2$,  we have the following characterization.
\begin{theorem}[B\l ocki, \cite{B2004}]
If $\Omega$ is an open subset of $\C^2$, then $\mathcal{D} (\Omega) = \rm{PSH}(\Omega)\cap \mathrm{W_{\rm{loc}}^{1,2}}(\Omega)$.
\end{theorem}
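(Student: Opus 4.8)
The plan is to establish the two inclusions separately, the whole argument resting on a triple of integration-by-parts identities in $\C^2$ that tie together, for a smooth plurisubharmonic $u$ and a cutoff $\chi\in C_0^\infty$, three quadratic energies: the Monge--Amp\`ere mass $\int\chi\,(dd^c u)^2$, the Dirichlet (gradient) energy $\int\chi\,du\wedge d^c u\wedge\beta$, and the mixed energy $\int\chi\,du\wedge d^c u\wedge dd^c u$, where $\beta:=dd^c|z|^2$ is a constant multiple of the Euclidean K\"ahler form, so that $du\wedge d^c u\wedge\beta$ is pointwise comparable to $|\nabla u|^2\,dV$. First I would fix the working meaning of $\mathcal{D}(\Omega)$: $u\in\mathcal{D}(\Omega)$ when, for every sequence of smooth plurisubharmonic $u_j\downarrow u$, the measures $(dd^c u_j)^2$ converge weakly to a nonnegative Radon measure that does not depend on the chosen sequence, the operator being continuous along decreasing sequences. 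All statements being local, I reduce to a ball $B$ on which $u<0$ and regularize by mollification $u_j=u*\rho_{1/j}\downarrow u$.

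For the sufficiency direction $\mathrm{PSH}(\Omega)\cap\mathrm{W}^{1,2}_{\mathrm{loc}}(\Omega)\subset\mathcal{D}(\Omega)$, the starting point is the identity $\int\chi\,(dd^c u_j)^2=\int u_j\,dd^c\chi\wedge dd^c u_j$, obtained by moving both $dd^c$ onto the test form. Estimating $dd^c\chi\le C\beta$, and then using the Stokes identity $\int d(\chi u_j\,d^c u_j\wedge\beta)=0$ to rewrite $\int\chi(-u_j)\,dd^c u_j\wedge\beta$ in terms of $\int\chi\,du_j\wedge d^c u_j\wedge\beta$, one bounds the Monge--Amp\`ere masses uniformly by the gradient energy $\int_{\mathrm{supp}\chi}|\nabla u_j|^2\,dV$. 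Since mollification converges in $\mathrm{W}^{1,2}_{\mathrm{loc}}$, this last quantity converges to $\int|\nabla u|^2\,dV<\infty$, giving a uniform mass bound, and weak-$*$ compactness extracts limit measures. The remaining and more delicate point, that the limit is independent of the approximating sequence and that the operator is continuous under decreasing limits, I would obtain from a stability/comparison estimate of Bedford--Taylor--Cegrell type, again fed by the uniform energy bounds just produced.

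For the necessity direction $\mathcal{D}(\Omega)\subset\mathrm{W}^{1,2}_{\mathrm{loc}}(\Omega)$, the hypothesis furnishes locally uniform bounds $\sup_j\int_B(dd^c u_j)^2<\infty$. The objective is a uniform bound on the Dirichlet energy, after which lower semicontinuity of the Dirichlet integral together with $u_j\to u$ in $L^1_{\mathrm{loc}}$ yields $u\in\mathrm{W}^{1,2}_{\mathrm{loc}}$. The key is an a priori interior estimate, special to $\C^2$, controlling the gradient energy on a smaller ball by the $L^1$ norm and the Monge--Amp\`ere mass on a larger one, of the shape
\[
\int_{B_{1/2}}|\nabla u_j|^2\,dV\le C\Big(\|u_j\|_{L^1(B)}+\|u_j\|_{L^1(B)}^{1/2}\big(\textstyle\int_B(dd^c u_j)^2\big)^{1/2}+\int_B(dd^c u_j)^2\Big).
\]
Because $\|u_j\|_{L^1(B)}\to\|u\|_{L^1(B)}$ and the masses are uniformly bounded, the right-hand side is uniformly bounded in $j$, which closes the argument. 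To prove this estimate I would run the Stokes identity reducing $\int\chi|\nabla u_j|^2\,dV$ to the mixed quantity $\int\chi(-u_j)\,dd^c u_j\wedge\beta$, and then interpolate that quantity between $(dd^c u_j)^2$ and the gradient energy by the pointwise Cauchy--Schwarz inequalities for $2\times2$ positive Hermitian matrices (the complex Hessian), absorbing the gradient term into its own multiple.

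The main obstacle is precisely this a priori estimate in the necessity direction. The naive bound $\int|\nabla u_j|^2\lesssim\int|u_j|\,dd^c u_j\wedge\beta$ is useless on its own, since its right-hand side is genuinely infinite for some plurisubharmonic functions (for instance $\log|z_1|$ in $\C^2$, whose Riesz measure charges its polar set), so the finiteness of the Monge--Amp\`ere mass granted by $u\in\mathcal{D}(\Omega)$ must be exploited through the sharp mixed-form inequalities; this is exactly where complex dimension two is indispensable, since in higher dimensions the domain of definition is governed by a more intricate nested family of finiteness conditions (B\l ocki) rather than a single Sobolev space. A secondary but unavoidable technical burden, common to both inclusions, is the verification that the Monge--Amp\`ere operator so defined is independent of the regularization and continuous under decreasing sequences, which I would defer to the standard Cegrell comparison machinery rather than redevelop.
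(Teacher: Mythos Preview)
The paper does not prove this theorem at all: it is stated purely as a citation of B\l ocki's result \cite{B2004}, with no argument given, and is used only as background to motivate the question of how $\mathrm{VMO}_{\mathrm{loc}}$ relates to $\mathcal{D}(\Omega)$. There is therefore nothing in the paper to compare your proposal against.

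That said, your outline is a plausible reconstruction of B\l ocki's original argument: both directions do hinge on integration-by-parts identities linking $\int\chi\,(dd^c u)^2$, $\int\chi\,du\wedge d^c u\wedge\beta$, and $\int\chi\,du\wedge d^c u\wedge dd^c u$, and the necessity direction does rest on an a priori interior gradient estimate fed by the Monge--Amp\`ere mass. Two places in your sketch are genuinely incomplete rather than merely compressed. First, in the sufficiency direction, the uniform mass bound alone does not give uniqueness of the limit; in B\l ocki's paper this is obtained by showing that $u_j\,dd^c u_j$ converges as a current (using that $u_j\to u$ in $L^2_{\mathrm{loc}}$ and $dd^c u_j\to dd^c u$ weakly), not by invoking Cegrell-type comparison, which would be circular here. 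Second, the precise form of your interior estimate in the necessity direction is not quite right: the actual inequality in $\C^2$ passes through $\int\chi^2\,du\wedge d^c u\wedge dd^c u$ (controlled directly by $\|u\|_{L^\infty}$ on a slightly larger ball times the Monge--Amp\`ere mass, after integration by parts) and then uses the pointwise inequality $du\wedge d^c u\wedge\beta\le du\wedge d^c u\wedge dd^c u$ \emph{plus a term involving $dd^c u\wedge\beta$}, rather than a Cauchy--Schwarz interpolation of the shape you wrote; your absorption scheme as stated would not close without that extra Laplacian term, which is in turn controlled by the sub-mean-value property.
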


It is natural to ask for the relation between   $\mathrm{VMO_{\text{loc}}}(\Omega)$ and $\mathcal{D}(\Omega)$ for a bounded domain $\Omega\subset \mathbb{C}^n$, since both families of functions involve $dd^c\varphi$ for a plurisubharmonic function $\varphi$. In one complex dimension, any function in $\mathcal{D}(\Omega)$ is in   $\mathrm{VMO_{\text{loc}}}(\Omega)$ (see for example Proposition 6 in \cite{Vig2007}).  %(see the proof of Proposition 6 in \cite{Vig2007}). 
However, in higher dimensions, the functions in $\mathrm{W_{\text{loc}}^{1,2}}(\Omega)$ are not in general in $\mathrm{VMO_{\text{loc}}}(\Omega)$ \cite{Vig2007}. We prove that the converse does not hold either (see Remark \ref{rmk:notW}). \\

Below, we use Proposition \ref{th:tcompo1} and Proposition \ref{th:tcompo2} to construct functions in $\mathrm{W_{\text{loc}}^{1,2}}(\Omega)$ from a plurisubharmonic function $\varphi$ thanks to the idea of  Lemma \ref{compLelong}.

 Let $t(x) : = - \log (-x)$  % be a nonlinear operator 
 defined on $(-\infty, 0)$.  
 For any integer $m$,  we denote $t^{(m)} :=\underbrace{ t \circ t \circ \cdots \circ t}_{m}$.
  \begin{proposition} \label{th:tcompo1}
 Let $\psi < - \gamma< 0$ be a subharmonic function on  a domain $\Omega \subset \R^{2n}$,  where $\gamma$ is sufficiently large.
Then $t^{(m)} ( \psi) \in \mathrm{W_{\rm{loc}}^{1,2}}(\Omega)$ for any integer $m$.
  \end{proposition}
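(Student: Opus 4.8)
The plan is to reduce the statement to the single iterate $m=1$, and then to prove that $w:=\log(-\psi)=-t(\psi)$ lies in $\mathrm{W}^{1,2}_{\mathrm{loc}}(\Omega)$ by an energy estimate. First I would record the elementary features of $t$ on $(-\infty,0)$: it is smooth, convex and increasing, with $t'(x)=-1/x$ and $t(x)\ge x+1$. The inequality $t(x)\ge x+1$ gives $0>t^{(m)}(\psi)\ge t^{(m-1)}(\psi)\ge\cdots\ge t(\psi)$ pointwise, so $|t^{(m)}(\psi)|\le-t(\psi)=\log(-\psi)$; and since $t$ is increasing, $t^{(j)}(\psi)<-\log^{(j)}\gamma<0$, so taking $\gamma$ large enough keeps every intermediate value in the domain of $t$ and bounded away from $0$. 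This is exactly where the hypothesis ``$\gamma$ sufficiently large'' is used. Two consequences follow. For the zeroth–order part, the elementary bound $(\log y)^2\le C_0+y$ for $y>0$ gives $|t^{(m)}(\psi)|^2\le(\log(-\psi))^2\le C_0-\psi$, which is in $L^1_{\mathrm{loc}}(\Omega)$ because $\psi$ is subharmonic, so $t^{(m)}(\psi)\in L^2_{\mathrm{loc}}$. For the gradient, the chain rule yields $\nabla t^{(m)}(\psi)=\bigl(\prod_{j=0}^{m-1}|t^{(j)}(\psi)|^{-1}\bigr)\nabla\psi$ almost everywhere, and since the factors with $j\ge1$ are bounded below by the positive constants $\log^{(j)}\gamma$, one obtains the pointwise domination $|\nabla t^{(m)}(\psi)|\le C_m\,|\nabla\psi|/|\psi|=C_m\,|\nabla t(\psi)|$. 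Hence it suffices to prove $t(\psi)\in\mathrm{W}^{1,2}_{\mathrm{loc}}(\Omega)$, i.e.\ that $\int_K|\nabla\psi|^2/\psi^2<\infty$ for every $K\Subset\Omega$.

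The heart of the argument is this last integral bound. Writing $w=\log(-\psi)$, I would use the pointwise identity $|\nabla w|^2=\Delta\psi/\psi-\Delta w$, obtained from $\Delta\log(-\psi)=\Delta\psi/\psi-|\nabla\psi|^2/\psi^2$ together with $|\nabla w|^2=|\nabla\psi|^2/\psi^2$. Fixing a cutoff $\chi\in C_0^\infty(\Omega)$ with $0\le\chi\le1$ and $\chi\equiv1$ on $K$, and integrating against $\chi$, the term $\int\chi\,\Delta\psi/\psi$ is nonpositive since $\Delta\psi\ge0$ and $\psi<0$, while $-\int\chi\,\Delta w=-\int w\,\Delta\chi$ after moving the Laplacian onto $\chi$. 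Dropping the nonpositive term therefore gives $\int\chi|\nabla w|^2\le-\int w\,\Delta\chi\le\|\Delta\chi\|_\infty\,\|w\|_{L^1(\mathrm{supp}\,\chi)}$, and $0<w=\log(-\psi)\le-\psi\in L^1_{\mathrm{loc}}$ makes the right–hand side finite.

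Since $\psi$ is merely subharmonic, I would make the identity and the integration by parts rigorous by approximation. Replacing $\psi$ by its mollifications $\psi_\varepsilon=\psi*\rho_\varepsilon$, which are smooth, subharmonic, decrease to $\psi$, and still satisfy $\psi_\varepsilon<-\gamma$, the computation above is legitimate for each $\varepsilon$ and gives the uniform bound $\int\chi|\nabla w_\varepsilon|^2\le\|\Delta\chi\|_\infty\int_{\mathrm{supp}\,\chi}\log(-\psi_\varepsilon)\le C$, using $\log(-\psi_\varepsilon)\le-\psi_\varepsilon$ and $\int(-\psi_\varepsilon)\to\int(-\psi)$. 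As $w_\varepsilon\to w$ in $L^1_{\mathrm{loc}}$ by monotone convergence, weak lower semicontinuity of the Dirichlet energy yields $\int\chi|\nabla w|^2\le C$; thus $w\in\mathrm{W}^{1,2}_{\mathrm{loc}}(\Omega)$, whence $t(\psi)=-w$ and, by the domination of the first paragraph, every $t^{(m)}(\psi)$ belongs to $\mathrm{W}^{1,2}_{\mathrm{loc}}(\Omega)$.

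I expect the main obstacle to be precisely this passage from the smooth approximants to the nonsmooth $\psi$: one must produce a bound that is uniform in $\varepsilon$, which is why discarding the nonpositive Riesz–measure term $\int\chi\,\Delta\psi_\varepsilon/\psi_\varepsilon$ rather than trying to estimate it is the clean move, and then transfer the $L^2$ gradient bound to the limit via lower semicontinuity. A minor secondary point to handle carefully is the chain–rule formula for $\nabla t^{(m)}(\psi)$ used in the reduction; this is also most safely read off from the smooth $\psi_\varepsilon$ and passed to the limit, rather than invoked directly on $\psi$.
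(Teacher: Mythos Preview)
Your argument is correct. The core $m=1$ estimate you prove --- that $\int_K|\nabla\psi|^2/\psi^2<\infty$ for every $K\Subset\Omega$ --- is exactly the content of the paper's Lemma~\ref{le:mainle} specialized to $\eta(t)=t$, and your identity $|\nabla w|^2=\Delta\psi/\psi-\Delta w$ together with the cutoff integration by parts is essentially how that lemma is proved. So for $m=1$ the two approaches coincide in substance, the paper simply quoting the black-box lemma while you unpack it.

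The genuine difference is in the passage to $m\ge2$. The paper observes that $t^{(j)}(\psi)$ is again subharmonic (as the composition of a convex increasing function with a subharmonic one) and satisfies $t^{(j)}(\psi)<-\log^{(j)}\gamma$, so Lemma~\ref{le:mainle} can simply be reapplied $m$ times. You instead establish the pointwise domination $|\nabla t^{(m)}(\psi)|\le C_m|\nabla t(\psi)|$ via the chain rule and the lower bound $|t^{(j)}(\psi)|\ge\log^{(j)}\gamma$ for $j\ge1$, reducing everything to the single estimate for $t(\psi)$. Your route is slightly more hands-on but has the advantage of never needing the subharmonicity of the iterates; the paper's route is shorter to write down once the lemma is available and makes the inductive structure transparent. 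Both are perfectly valid, and the mollification/lower-semicontinuity step you flag is handled in the paper by the same device (Lemma~\ref{le:mainle} already absorbs it).
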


    \begin{proposition} \label{th:tcompo2}
 Let $\psi < - \gamma< 0$ be a subharmonic function on  a domain $\Omega \subset  \R^{2n}$. 
Then $- ( -\psi)^\alpha \in \mathrm{W_{\rm{loc}}^{1,2}}(\Omega)$ for every $0\leq \alpha <\frac 1 2$.
  \end{proposition}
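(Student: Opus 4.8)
The plan is to establish the two defining conditions for $u:=-(-\psi)^\alpha$ to lie in $\mathrm{W^{1,2}_{\mathrm{loc}}}(\Omega)$: that $u\in L^2_{\mathrm{loc}}(\Omega)$ and that its distributional gradient is in $L^2_{\mathrm{loc}}(\Omega)$. The case $\alpha=0$ is trivial, since then $u\equiv-1$, so I assume $0<\alpha<\tfrac12$. Throughout, the separation hypothesis $\psi<-\gamma<0$ guarantees that $-\psi$ stays bounded away from $0$, which keeps the negative powers of $-\psi$ appearing below locally bounded. For the $L^2$ membership of $u$ itself, note that $u^2=(-\psi)^{2\alpha}$ and, since $0<2\alpha<1$, one has $(-\psi)^{2\alpha}\le 1+(-\psi)$. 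As $\psi$ is subharmonic, hence $\psi\in L^1_{\mathrm{loc}}(\Omega)$, this shows $u^2\in L^1_{\mathrm{loc}}(\Omega)$, i.e. $u\in L^2_{\mathrm{loc}}(\Omega)$.

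The heart of the matter is the gradient bound. Formally $|\nabla u|^2=\alpha^2(-\psi)^{2\alpha-2}|\nabla\psi|^2$, and the naive estimate $|\nabla u|\le \alpha\gamma^{\alpha-1}|\nabla\psi|$ is useless, because a subharmonic function need not have an $L^2$ gradient (already $\log|z|$ fails in $\mathbb{C}$). Instead I would run a Caccioppoli-type energy identity with a carefully chosen primitive. Set $G(s):=\dfrac{-(-s)^{2\alpha}}{2\alpha(1-2\alpha)}$ for $s<0$, so that $G'(s)=\dfrac{(-s)^{2\alpha-1}}{1-2\alpha}$ and $G''(s)=(-s)^{2\alpha-2}$. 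Regularize $\psi$ by mollification $\psi_\varepsilon:=\psi*\rho_\varepsilon$, which are smooth, subharmonic, still $<-\gamma$, and decrease to $\psi$; write $u_\varepsilon:=-(-\psi_\varepsilon)^\alpha$. For a cutoff $\chi\in C_c^\infty(\Omega)$ with $\chi\ge 0$, the pointwise identity $\Delta G(\psi_\varepsilon)=G'(\psi_\varepsilon)\Delta\psi_\varepsilon+(-\psi_\varepsilon)^{2\alpha-2}|\nabla\psi_\varepsilon|^2$ together with two integrations by parts (moving the Laplacian onto $\chi$) yields
\[
\int\chi\,|\nabla u_\varepsilon|^2=\alpha^2\int(\Delta\chi)\,G(\psi_\varepsilon)-\alpha^2\int\chi\,G'(\psi_\varepsilon)\,\Delta\psi_\varepsilon .
\]
The last integral is nonnegative (since $\chi,G'(\psi_\varepsilon),\Delta\psi_\varepsilon\ge 0$) and bounded, using $-\psi_\varepsilon>\gamma$ and $2\alpha-1<0$ to control $G'(\psi_\varepsilon)\le\gamma^{2\alpha-1}/(1-2\alpha)$ against the uniformly locally finite mass of $\Delta\psi_\varepsilon$. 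Dropping this nonnegative term gives
\[
\int\chi\,|\nabla u_\varepsilon|^2\le \alpha^2\,\|\Delta\chi\|_\infty\int_{\mathrm{supp}\,\chi}|G(\psi_\varepsilon)| ,
\]
and the right-hand side is bounded uniformly in $\varepsilon$ because $|G(\psi_\varepsilon)|\asymp(-\psi_\varepsilon)^{2\alpha}\uparrow(-\psi)^{2\alpha}\in L^1_{\mathrm{loc}}(\Omega)$ by the first step.

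Finally I would pass to the limit: since $-\psi_\varepsilon\uparrow-\psi$, monotone convergence gives $u_\varepsilon\to u$ in $L^1_{\mathrm{loc}}(\Omega)$, while the uniform bound makes $\{\nabla u_\varepsilon\}$ weakly precompact in $L^2_{\mathrm{loc}}(\Omega)$; identifying the weak limit with the distributional gradient $\nabla u$ yields $\nabla u\in L^2_{\mathrm{loc}}(\Omega)$ and therefore $u\in\mathrm{W^{1,2}_{\mathrm{loc}}}(\Omega)$. The main obstacle is precisely this gradient estimate, and the choice of $G$ displays why the threshold $\alpha<\tfrac12$ is sharp: one needs $2\alpha<1$ for $G\sim(-\psi)^{2\alpha}$ to be locally integrable, and $2\alpha-1<0$ for the coefficient $G'$ to stay bounded via the separation $-\psi>\gamma$. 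Both requirements degenerate exactly at $\alpha=\tfrac12$, where $G$ acquires a super-linear $(-\psi)\log(-\psi)$ growth that need not be locally integrable for a general subharmonic $\psi$.
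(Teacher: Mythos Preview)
Your proof is correct and takes a genuinely different route from the paper. The paper's argument is a one-line application of a black-box lemma (Proposition~4.2 of Chen, \emph{Hardy--Sobolev type inequalities}): choosing $\eta(t)=t^{1-2\alpha}$, one checks that $1/\eta'(-\psi)=(-\psi)^{2\alpha}/(1-2\alpha)\in L^1_{\mathrm{loc}}$ and computes $\kappa_\eta(t)=\frac{\sqrt{1-2\alpha}}{\alpha}\,t^\alpha$, so the lemma delivers $(-\psi)^\alpha\in \mathrm{W}^{1,2}_{\mathrm{loc}}$ immediately. You instead give a self-contained Caccioppoli-type derivation: the primitive $G$ is chosen so that $G''(\psi_\varepsilon)\,|\nabla\psi_\varepsilon|^2=\alpha^{-2}|\nabla u_\varepsilon|^2$, and after integrating $\chi\,\Delta G(\psi_\varepsilon)$ by parts onto the cutoff you discard the sign-definite term $\chi\,G'(\psi_\varepsilon)\,\Delta\psi_\varepsilon\ge 0$. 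The two arguments share the same integration-by-parts skeleton, but your version moves both derivatives onto $\chi$ and bounds by $\|\Delta\chi\|_\infty\int|G(\psi_\varepsilon)|$, whereas the paper's lemma moves only one and bounds by $\int|\nabla\phi|^2/\eta'(-\psi)$. What you gain is a fully elementary argument that makes the threshold $\alpha=\tfrac12$ transparently sharp (both $G\in L^1_{\mathrm{loc}}$ and the pointwise bound $G'(\psi_\varepsilon)\le\gamma^{2\alpha-1}/(1-2\alpha)$ break down exactly there); what you pay is having to spell out the mollification and weak-compactness passage that the cited lemma absorbs.
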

  \begin{remark}\label{rmk:notW}
  Note that the exponent $\frac 1 2$ in Proposition \ref{th:tcompo2} is sharp.\\\medskip
  Let $\mathbb B^2_r:= \{(z_1,z_2)\in\mathbb{C}^2\mid \vert z_1\vert ^2 + |z_2|^2 <r \}$ for $r \ll 1$
and
  $ \psi(z_1,z_2) : =   \log |z_1|^2 $. So, $\psi<-\gamma<0$ 
  and
\begin{eqnarray*}
  \nabla ( - (-\log |z_1|^2)^{\frac{1}{2}}) = \frac{1}{2} \frac{1}{z_1} (-\log |z_1|^2)^{-1/2}  \notin L^2_{\rm{loc}}(\mathbb B^2_r)
  \end{eqnarray*}
Thus,  $ - (-\log |z_1|^2)^{\frac{1}{2}}\notin \mathrm{W_{\rm{loc}}^{1,2}}(\mathbb B^2_r)$.  On the other hand, from Example \ref{example2}, $- (-\log|z_1|^2)^{\alpha} \in \mathrm{VMO_{\rm{loc}}}(\mathbb B^2_r)$ for any $0<\alpha <1$.
  \end{remark}

In order to prove Proposition \ref{th:tcompo1} and  Proposition  \ref{th:tcompo2},  we need the following lemmas.

\begin{lemma} [\cite{Chen2020},  Lemma 3.2]
Let $\psi$,  $\phi$ be $C^2$ functions on a domain $\Omega\subset \R^{2n}$.  Let $\eta : \R \rightarrow (0, +\infty)$ be a $C^1$ function such that $\eta' >0$. If either $\phi$ or $\psi$ has compact support in $\Omega$,  then 

\begin{eqnarray}  \label{eq:elmenteq}
\int _{\Omega}    \phi^2 \left [   \frac{2 \Delta \psi }{ \eta(-\psi) }   + \frac{ \eta'(-\psi) }{  \eta^2(-\psi) }    | \nabla \psi    | ^2  \right ]  \leq 4 \int_{\Omega } \frac{ | \nabla \phi    | ^2  }{ \eta'(-\psi)}  .
\end{eqnarray}
\end{lemma}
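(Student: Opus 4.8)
The plan is to reduce the inequality to a single integration by parts followed by one application of Young's inequality, with the compact support of $\phi$ or $\psi$ ensuring that all boundary terms vanish. Once the correct grouping is identified, the argument is essentially bookkeeping.

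First I would integrate by parts the term carrying $\Delta\psi$. Writing $\Delta\psi=\operatorname{div}(\nabla\psi)$ and pairing against the weight $\phi^2/\eta(-\psi)$ gives
\begin{equation*}
\int_\Omega \phi^2 \frac{\Delta\psi}{\eta(-\psi)} = -\int_\Omega \nabla\!\left(\frac{\phi^2}{\eta(-\psi)}\right)\cdot\nabla\psi .
\end{equation*}
Expanding the gradient and using $\nabla(\eta(-\psi))=-\eta'(-\psi)\nabla\psi$ yields
\begin{equation*}
\nabla\!\left(\frac{\phi^2}{\eta(-\psi)}\right)=\frac{2\phi\,\nabla\phi}{\eta(-\psi)}+\phi^2\frac{\eta'(-\psi)}{\eta^2(-\psi)}\nabla\psi .
\end{equation*}
Multiplying the resulting identity by $2$ and then adding the remaining summand $\int_\Omega \phi^2\frac{\eta'(-\psi)}{\eta^2(-\psi)}|\nabla\psi|^2$ of the left-hand side of \eqref{eq:elmenteq}, the two copies of $\phi^2\frac{\eta'(-\psi)}{\eta^2(-\psi)}|\nabla\psi|^2$ combine so that the whole left-hand side equals
\begin{equation*}
-\int_\Omega \frac{4\phi\,\nabla\phi\cdot\nabla\psi}{\eta(-\psi)} - \int_\Omega \phi^2\frac{\eta'(-\psi)}{\eta^2(-\psi)}|\nabla\psi|^2 .
\end{equation*}

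It then remains to control the cross term. Here I would apply Cauchy--Schwarz pointwise, $|\nabla\phi\cdot\nabla\psi|\le|\nabla\phi|\,|\nabla\psi|$, and then Young's inequality in the sharp form $4AB\le 4A^2+B^2$ (equivalently $(2A-B)^2\ge 0$) with the calibrated choice $A=|\nabla\phi|/\sqrt{\eta'(-\psi)}$ and $B=|\phi|\,|\nabla\psi|\sqrt{\eta'(-\psi)}/\eta(-\psi)$, legitimate because $\eta'>0$. This produces
\begin{equation*}
-\frac{4\phi\,\nabla\phi\cdot\nabla\psi}{\eta(-\psi)}\le \frac{4|\nabla\phi|^2}{\eta'(-\psi)}+\phi^2\frac{\eta'(-\psi)}{\eta^2(-\psi)}|\nabla\psi|^2 .
\end{equation*}
Integrating and inserting this into the expression above, the term $\int_\Omega\phi^2\frac{\eta'(-\psi)}{\eta^2(-\psi)}|\nabla\psi|^2$ cancels exactly against its negative counterpart, leaving precisely $4\int_\Omega |\nabla\phi|^2/\eta'(-\psi)$, which is \eqref{eq:elmenteq}.

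The one genuinely delicate point is the \emph{choice of the splitting} in Young's inequality: the factors $A$ and $B$ must be tuned so that the ``bad'' term $\phi^2\frac{\eta'(-\psi)}{\eta^2(-\psi)}|\nabla\psi|^2$ generated by Young matches exactly the one surviving the integration by parts, enabling a clean cancellation while preserving the constant $4$ in front of $\int_\Omega|\nabla\phi|^2/\eta'(-\psi)$. Everything else is routine; one should only verify at the outset that $\eta(-\psi)>0$ and $\eta'(-\psi)>0$ throughout $\Omega$, so that all the weights are well-defined, which the hypotheses $\eta>0$ and $\eta'>0$ guarantee.
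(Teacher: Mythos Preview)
Your argument is correct: the integration by parts (with the compact support of either $\phi$ or $\psi$ killing the boundary term), the computation of $\nabla(\phi^2/\eta(-\psi))$, and the calibrated Young inequality $4AB\le 4A^2+B^2$ are all carried out cleanly and lead exactly to \eqref{eq:elmenteq}. Note, however, that the paper does not supply its own proof of this lemma; it is quoted verbatim from \cite{Chen2020}, Lemma~3.2, so there is no in-paper proof to compare against---your write-up is in fact the standard derivation one finds in that reference.
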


\begin{remark}
If we further assume that $\psi$ is a subharmonic function, then we get from \eqref{eq:elmenteq}
\begin{eqnarray}  \label{eq:elmenteq11}
\int _{\Omega}    \phi^2   \frac{ \eta'(-\psi) }{  \eta^2(-\psi) }    | \nabla \psi    | ^2   \leq 4 \int_{\Omega } \frac{ | \nabla \phi    | ^2  }{ \eta'(-\psi)}.
\end{eqnarray}

\end{remark}

\begin{lemma} [\cite{C2017}, Proposition 4.2] \label{le:mainle}
Let $\psi < - \gamma< 0$ be a subharmonic function on a domain $\Omega \subset \R^{2n}$.  Let $\eta : \R \rightarrow (0, +\infty)$ be a $C^1$ function such that $\eta'$ is a positive non-increasing function with $\frac{1}{\eta'(-\psi)} \in L^1_{\rm{loc}} (\Omega)$.  Then $\kappa_\eta(-\psi) \in  \mathrm{W_{\rm{loc}}^{1,2}}(\Omega)$ where $\kappa_\eta(t) : = \int_\gamma^t \frac{ \sqrt{\eta'(s)}}{ \eta(s)} ds$, $t \geq \gamma.$ Moreover, one has 
\begin{eqnarray} \label{eq:tidu1}
\int _{\Omega}   \phi^2\frac{ \eta'(-\psi) }{  \eta^2(-\psi) }    | \nabla \psi    | ^2   \leq 4 \int_{\Omega } \frac{ | \nabla \phi    | ^2  }{ \eta'(-\psi)} , \ \ \  \phi \in \mathrm{W_0^{1,2}}(\Omega).
\end{eqnarray}

\end{lemma}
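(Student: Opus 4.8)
The plan is to reduce the statement to the elementary energy inequality \eqref{eq:elmenteq11}, which is already available for $C^2$ subharmonic functions, and then to remove the smoothness hypothesis by mollification. Write $\kappa := \kappa_\eta$, so that $\kappa'(t) = \sqrt{\eta'(t)}/\eta(t)$, and set $u := \kappa(-\psi)$. The chain rule (to be justified a posteriori) gives $\nabla u = -\kappa'(-\psi)\nabla\psi$, hence $|\nabla u|^2 = \frac{\eta'(-\psi)}{\eta^2(-\psi)}|\nabla\psi|^2$, which is exactly the integrand on the left of \eqref{eq:tidu1}. Thus \eqref{eq:tidu1} is the assertion $\int_\Omega \phi^2|\nabla u|^2 \le 4\int_\Omega |\nabla\phi|^2/\eta'(-\psi)$, and the membership $u \in \mathrm{W^{1,2}_{loc}}(\Omega)$ will follow by choosing $\phi$ to be a cutoff equal to $1$ on a prescribed $\Omega' \Subset \Omega$.

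First I would regularize. Let $\psi_\varepsilon := \psi * \rho_\varepsilon$ be the standard mollifications on $\Omega_\varepsilon := \{x : \mathrm{dist}(x,\partial\Omega) > \varepsilon\}$; these are smooth, subharmonic, satisfy $\psi \le \psi_\varepsilon < -\gamma$, and decrease pointwise to $\psi$ as $\varepsilon \downarrow 0$. Fixing $\phi \in C^\infty_0(\Omega)$ and taking $\varepsilon$ small enough that $\mathrm{supp}\,\phi \Subset \Omega_\varepsilon$, applying \eqref{eq:elmenteq11} to the smooth subharmonic $\psi_\varepsilon$ gives, with $u_\varepsilon := \kappa(-\psi_\varepsilon)$,
\begin{equation*}
\int_\Omega \phi^2 |\nabla u_\varepsilon|^2 = \int_\Omega \phi^2 \frac{\eta'(-\psi_\varepsilon)}{\eta^2(-\psi_\varepsilon)}|\nabla\psi_\varepsilon|^2 \le 4\int_\Omega \frac{|\nabla\phi|^2}{\eta'(-\psi_\varepsilon)} \le 4\int_\Omega \frac{|\nabla\phi|^2}{\eta'(-\psi)},
\end{equation*}
where the last step uses that $\eta'$ is non-increasing and $-\psi_\varepsilon \le -\psi$, so $\eta'(-\psi_\varepsilon) \ge \eta'(-\psi)$; the final bound is finite since $1/\eta'(-\psi) \in L^1_{\mathrm{loc}}(\Omega)$ and $\phi$ has compact support. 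This furnishes a locally uniform bound on the Dirichlet energy of the $u_\varepsilon$.

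Next I would upgrade this into $\mathrm{W^{1,2}_{loc}}$-membership of the limit. Since $\kappa$ is increasing and $-\psi_\varepsilon \uparrow -\psi$, the functions $u_\varepsilon \ge 0$ increase pointwise to $u$. Because $\eta$ is increasing and $\eta'$ non-increasing, $\kappa'(t) = \sqrt{\eta'(t)}/\eta(t) \le \sqrt{\eta'(\gamma)}/\eta(\gamma)$ is bounded on $[\gamma,\infty)$, whence $0 \le u \le C(-\psi)$ with $-\psi \in L^1_{\mathrm{loc}}$; thus $u \in L^1_{\mathrm{loc}}$ and $u_\varepsilon \to u$ in $L^1_{\mathrm{loc}}$ by monotone convergence. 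Taking $\phi$ a cutoff on a ball $B \Subset \Omega$ and combining the uniform gradient bound with the Poincar\'e inequality for the smooth $u_\varepsilon$ (whose means $\fint_B u_\varepsilon \uparrow \fint_B u < \infty$ stay bounded) bounds $\|u_\varepsilon\|_{L^2(B)}$ uniformly; Fatou then gives $u \in L^2(B)$, and, since $u_\varepsilon \uparrow u$, $u_\varepsilon \to u$ in $L^2_{\mathrm{loc}}$. Hence $u_\varepsilon$ is bounded in $W^{1,2}(B)$ and converges in $L^2(B)$ to $u$, so $\nabla u_\varepsilon \rightharpoonup \nabla u$ weakly in $L^2(B)$ and $u \in \mathrm{W^{1,2}_{loc}}(\Omega)$. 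Weak lower semicontinuity of $v \mapsto \|\phi\, v\|_{L^2}$ applied to $\nabla u_\varepsilon \rightharpoonup \nabla u$, together with dominated convergence on the right-hand side (using again $1/\eta'(-\psi_\varepsilon) \le 1/\eta'(-\psi)$), yields \eqref{eq:tidu1} for all $\phi \in C^\infty_0(\Omega)$; a density argument in the weighted Dirichlet norm then extends it to $\phi \in \mathrm{W^{1,2}_0}(\Omega)$.

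The main obstacle I anticipate is the limit passage itself rather than any single inequality: one must simultaneously control the unbounded weight $1/\eta'(-\psi)$ on the right (handled by the monotonicity $\eta'(-\psi_\varepsilon) \ge \eta'(-\psi)$ and dominated convergence) and pass the left-hand Dirichlet energy to the limit. The latter is delicate because $\psi$ is merely subharmonic, so $\nabla\psi$ is only an $L^1_{\mathrm{loc}}$ vector field and $|\nabla\psi|^2$ need not be integrable; the remedy is to phrase everything through $u = \kappa(-\psi)$ and to rely on weak compactness in $W^{1,2}$ plus lower semicontinuity, which is exactly what the uniform energy bound and the $L^2_{\mathrm{loc}}$ convergence of $u_\varepsilon$ provide. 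A secondary technical point is the weak chain-rule identity $\nabla u = -\kappa'(-\psi)\nabla\psi$, valid once $u \in \mathrm{W^{1,2}_{loc}}$ is known, together with the density extension to $\mathrm{W^{1,2}_0}(\Omega)$, which only needs to be carried out when the right-hand side of \eqref{eq:tidu1} is finite.
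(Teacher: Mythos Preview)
The paper does not prove this lemma: it is quoted from \cite{C2017}, Proposition~4.2, and the text moves directly to the applications (Propositions~\ref{th:tcompo1} and~\ref{th:tcompo2}). So there is no ``paper's own proof'' to compare against beyond the preparatory inequality \eqref{eq:elmenteq11}, which the paper records precisely so that the cited result can be believed.

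Your mollification argument is the natural one and is correct. The three points that make it work are exactly the ones you isolate: (i) inequality \eqref{eq:elmenteq11} applies to the smooth subharmonic $\psi_\varepsilon$; (ii) the monotonicity of $\eta'$ together with $-\psi_\varepsilon \le -\psi$ yields the uniform domination $1/\eta'(-\psi_\varepsilon)\le 1/\eta'(-\psi)\in L^1_{\mathrm{loc}}$, giving an $\varepsilon$-free bound on the right-hand side; (iii) the resulting uniform Dirichlet bound on $u_\varepsilon=\kappa(-\psi_\varepsilon)$, combined with $u_\varepsilon\to u$ in $L^2_{\mathrm{loc}}$, gives weak $W^{1,2}$-compactness and lower semicontinuity for the left-hand side. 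The linear growth estimate $\kappa'(t)\le \sqrt{\eta'(\gamma)}/\eta(\gamma)$ you derive is what guarantees $u\in L^1_{\mathrm{loc}}$ a priori, and your Poincar\'e step is a clean way to upgrade to $L^2_{\mathrm{loc}}$. Two minor remarks: to ensure $\psi_\varepsilon<-\gamma$ on $\mathrm{supp}\,\phi$ you should use that the upper semicontinuous $\psi$ satisfies $\sup_K\psi<-\gamma$ on compacts $K$, so the mollifications inherit a strict bound for small $\varepsilon$; and for the density extension to $\phi\in \mathrm{W}^{1,2}_0(\Omega)$, the inequality is vacuous when the weighted right-hand side is infinite, while in the finite case Fatou on the left along an approximating sequence $\phi_j\in C^\infty_0$ suffices. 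You have already anticipated both issues.
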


\begin{proof} [Proof of Proposition \ref{th:tcompo1}]
Since $\psi$ is subharmonic, as a composition with a convex function, $-\log(-\psi)$ is subharmonic. Let $\eta (t): = t$ and $\kappa_{\eta} (t):  = \log t$. By Lemma \ref{le:mainle}, $-\log(-\psi) \in \mathrm{W_{\text{loc}}^{1,2}}(\Omega)$.  If $\gamma$ is sufficiently large so that $t^{(m)} (\psi)$ is still well-defined, we repeat $m$ times Lemma \ref{le:mainle} to get the result.

\end{proof}

\begin{proof} [Proof of Proposition \ref{th:tcompo2}]
Let $\eta(t) :  = t^{1-2\alpha}$,  $0<\alpha <\frac 12$.  Then
$$
\frac{1}{ \eta'(-\psi)} = \frac{1}{1-2 \alpha}(-\psi)^{2\alpha} \in L_{\text{loc}}^1(\Omega),
$$
and
\begin{eqnarray*}
\kappa_{\eta}(t) &=&  \int_0^t \frac{ \sqrt{\eta'(s)}}{ \eta(s)} ds\\ 
&=&\int_0^t \frac{\sqrt{(1-2\alpha) s^{-2\alpha}}}{s^{1-2\alpha}}ds\\
&=& \sqrt{1-2\alpha} \int_0^t  \frac{1}{ s^{1-\alpha} }ds\\
&=& \frac{\sqrt{1-2\alpha} }{\alpha} t^\alpha.
\end{eqnarray*}
By Lemma \ref{le:mainle}, $ (-\psi)^\alpha \in \mathrm{W_{\text{loc}}^{1,2}}(\Omega)$ for every $0<\alpha<\frac 1 2$.
For $\alpha=0$, one trivially has $ (-\psi)^\alpha \in \mathrm{W_{\text{loc}}^{1,2}}(\Omega)$. %The proof is complete.
\end{proof}

\section*{Acknowledgements}
We thank Prof.  Bo-Yong Chen,  Dr.  Xu Wang and Dr.  YuanPu Xiong for valuable discussions. We also thank the referees for pointing out inaccuracies in the previous version and for their valuable remarks.

\begin{bibdiv}
\begin{biblist}

\bib{BT1982}{article}{
AUTHOR = {Bedford, Eric},
AUTHOR= {Taylor, B. A.},
     TITLE = {A new capacity for plurisubharmonic functions},
   JOURNAL = {Acta Math.},
 % FJOURNAL = {Acta Mathematica},
    VOLUME = {149},
      YEAR = {1982},
    NUMBER = {1-2},
     PAGES = {1--40},
       URL = {https://doi.org/10.1007/BF02392348},
}
 % \bibitem{BT1982}  Bedford, E. and Taylor, B. A., A new capacity for plurisubharmonic functions. Acta Math. \textbf{149}, 1-41 (1982) 

\bib{B2004}{article}{
AUTHOR = {B\l ocki, Zbigniew},
     TITLE = {On the definition of the {M}onge-{A}mp\`ere operator in $\mathbb{C}^2$},
   JOURNAL = {Math. Ann.},
  %FJOURNAL = {Mathematische Annalen},
    VOLUME = {328},
      YEAR = {2004},
    NUMBER = {3},
     PAGES = {415--423},
} 

% \bibitem{B-K2007} Blocki, Z. and Kolodziej, S., On regularization of plurisubharmonic functions on manifolds. Proc. Amer. Math. Soc.  \textbf{135}(7),2089-2093 (2007) 

% \bibitem{Bo13} Berndtsson,  B. ,  The openness conjecture for plurisubharmonic functions.  arXiv:1305.5781, 2013.

\bib{BFW2019}{article}{
AUTHOR = {Biard, S\'everine},
AUTHOR={Forn\ae ss, John Erik},
AUTHOR={Wu, Jujie},
     TITLE = {Weighted-{$L^2$} polynomial approximation in {$\mathbb{C}$}},
   JOURNAL = {Trans. Amer. Math. Soc.},
  %FJOURNAL = {Transactions of the American Mathematical Society},
    VOLUME = {373},
      YEAR = {2020},
    NUMBER = {2},
     PAGES = {919--938},
} 
%Biard, S., Forn\ae ss, J.E. and Wu, J.,  Weighted $L^2$ polynomial approximation in $\C$. Trans. Amer. Math. Soc.  \textbf{373} (2), 919-938(2019) 

%\bibitem{Bishop} Bishop, E.: Differentiable manifolds in complex Euclidian space. Duke Math. J. 1965, 32, 1--22.

%\bibitem{B-K2007} Blocki Z, Kolodziej S. , On regularization of plurisubharmonic functions on manifolds[J]. Proceedings of the American Mathematical Society, 2007, 135(7): 2089-2093.

\bib{BFW2021}{article}{
AUTHOR = {Biard, S\'everine},
AUTHOR={Forn\ae ss, John Erik},
AUTHOR={Wu, Jujie},
     TITLE = {Weighted {$L^2$} version of {M}ergelyan and {C}arleman
              approximation},
   JOURNAL = {J. Geom. Anal.},
  %FJOURNAL = {Journal of Geometric Analysis},
    VOLUME = {31},
      YEAR = {2021},
    NUMBER = {4},
     PAGES = {3889--3914},
} %Biard,  S.,  Forn\ae ss, J.E. and Wu, J., Weighted $L^2$ version of Mergelyan and Carleman approximation.  J. Geom. Anal.  \textbf{31},  3889-3914(2021)

\bib{BN1961}{article}{
AUTHOR = {Brezis, Ha\"im},
AUTHOR= {Nirenberg, Louis},
TITLE = {Degree theory and BMO; part I: Compact manifolds without boundaries},
JOURNAL = {Selecta Math. (N.S.)},
VOLUME={1},
YEAR={1995},
NUMBER={2},
PAGES={197--264},
}
%Brezis, H.  and Nirenberg, L.,  Degree theory and BMO; part I: Compact manifolds without boundaries. Selecta Math. (N.S.) \textbf{1}(2), 197-264 (1995)

\bib{BN1996}{article}{
AUTHOR = {Brezis, Ha\"im},
AUTHOR= {Nirenberg, Louis},
     TITLE = {Degree theory and {BMO}. {II}. {C}ompact manifolds with
              boundaries},
      NOTE = {With an appendix by the authors and Petru Mironescu},
   JOURNAL = {Selecta Math. (N.S.)},
  %FJOURNAL = {Selecta Mathematica. New Series},
    VOLUME = {2},
      YEAR = {1996},
    NUMBER = {3},
     PAGES = {309--368},
}     %Brezis, H.  and Nirenberg, L.,  Degree theory and BMO; part II: Compact manifolds with boundaries.  Selecta Math. (N.S.)  \textbf{2}(3),  309-368(1996)

\bib{Bru1999}{article}{
AUTHOR = {Brudnyi, Alexander},
     TITLE = {Local inequalities for plurisubharmonic functions},
   JOURNAL = {Ann. of Math. (2)},
 % FJOURNAL = {Annals of Mathematics. Second Series},
    VOLUME = {149},
      YEAR = {1999},
    NUMBER = {2},
     PAGES = {511--533},
     }%Brudnyi, A., Local inequalities for plurisubharmonic functions.  Ann. Math. \textbf{149},   511-533(1999)

\bib{Bru2002}{article}{
AUTHOR = {Brudnyi, Alexander},
     TITLE = {On a {BMO}-property for subharmonic functions},
   JOURNAL = {J. Fourier Anal. Appl.},
 % FJOURNAL = {The Journal of Fourier Analysis and Applications},
    VOLUME = {8},
      YEAR = {2002},
    NUMBER = {6},
     PAGES = {603--612},}%Brudnyi, A., On a BMO-property for subharmonic functions.  J. Fourier Anal. Appl. \textbf{8},  603-612(2002)

\bib{C2017}{unpublished}{
author={Chen,  Bo-Yong},
title={Hardy-Sobolev type inequalities and their applications},
note ={arXiv:1712.02044, 2017},
}

\bib{Chen2020}{article}{
AUTHOR = {Chen, Bo-Yong},
     TITLE = {Weighted {B}ergman kernel, directional {L}elong number and
              {J}ohn-{N}irenberg exponent},
   JOURNAL = {J. Geom. Anal.},
  %FJOURNAL = {Journal of Geometric Analysis},
    VOLUME = {30},
      YEAR = {2020},
    NUMBER = {2},
     PAGES = {1271--1292},} %Chen, B.-Y., Weighted Bergman kernel, directional Lelong number and John-Nirenberg exponent. J. Geom. Anal.  \textbf{30}, 1271-1292(2020)

 \bib{Chen2021}{article}{
 AUTHOR = {Chen, Bo-Yong},
 AUTHOR= {Wang, Xu},
     TITLE = {Bergman kernel and oscillation theory of plurisubharmonic
              functions},
   JOURNAL = {Math. Z.},
 % FJOURNAL = {Mathematische Zeitschrift},
    VOLUME = {297},
      YEAR = {2021},
    NUMBER = {3-4},
     PAGES = {1507--1527},
 } %Chen, B.-Y. and Wang, X., Bergman kernel and oscillation theory of plurisubharmonic functions. Math. Z. \textbf{297}, 1507-1527(2021)
 
\bib{Demailly1982}{article}{
AUTHOR = {Demailly, Jean-Pierre},
     TITLE = {Sur les nombres de {L}elong associ\'es \`a{} l'image directe
              d'un courant positif ferm\'e},
   JOURNAL = {Ann. Inst. Fourier (Grenoble)},
%  FJOURNAL = {Universit\'e{} de Grenoble. Annales de l'Institut Fourier},
    VOLUME = {32},
      YEAR = {1982},
    NUMBER = {2},
     PAGES = {ix, 37--66},

} %Demailly,  J.-P.,  Sur les nombres de Lelong associ$\rm{\acute{e}}$s $\rm{\grave{a}}$ l'image directe d'un courant positif ferm$\rm{\acute{e}}$,  Ann.  Inst.  Fourier (Grenoble) \textbf{32}, 37-66(1982)

%\bibitem{Dalphin} Dalphin, J. Uniform ball property and existence of optimal shapes for a wide class of geometric functionals, Interfaces Free Bound. \textbf{20}(2), 211-260(2018)  

\bib{Dem93}{book}{
AUTHOR = {Demailly, Jean-Pierre},
     TITLE = {Monge-{A}mp\`ere operators, {L}elong numbers and intersection
              theory},
 BOOKTITLE = {Complex analysis and geometry},
    SERIES = {Univ. Ser. Math.},
     PAGES = {115--193},
 PUBLISHER = {Plenum, New York},
      YEAR = {1993},} %Demailly,  J.-P., Monge-Amp\`ere  Operators, Lelong Numbers and Intersection Theory. In: Ancona, V., Silva, A. (eds) Complex Analysis and Geometry. The University Series in Mathematics. Springer, Boston, MA. %https://doi.org/10.1007/978-1-4757-9771-8_4

\bib{Dem97}{unpublished}{
AUTHOR = {Demailly, Jean-Pierre},
TITLE={Complex Analytic and Differential Geometry},
note={https://www-fourier.ujf-grenoble.fr/$\sim$demailly/source\_files/analgeom/agbook.pdf},
year={1997},}
%Demailly,  J.-P., Complex Analytic and Differential Geometry.  University of Grenoble I,  Grenoble
%(1997)

\bib{DK2001}{article}{
AUTHOR = {Demailly, Jean-Pierre},
AUTHOR= {Koll\'ar, J\'anos},
     TITLE = {Semi-continuity of complex singularity exponents and
              {K}\"ahler-{E}instein metrics on {F}ano orbifolds},
   JOURNAL = {Ann. Sci. \'Ecole Norm. Sup. (4)},
 % FJOURNAL = {Annales Scientifiques de l'\'Ecole Normale Sup\'erieure.
             % Quatri\`eme S\'erie},
    VOLUME = {34},
      YEAR = {2001},
    NUMBER = {4},
     PAGES = {525-556},
     }  %Demailly, J.-P. and Koll$\rm{\acute{a}}$r, J.,  Semi-continuity of complex singularity exponents and K\"ahler-Einstein metrics on Fano orbifolds.  Ann.  Sci.  \'Ec.  Norm. Sup\'er.  \textbf{34}, 525-556(2001)

% \bibitem{Favre-Josson} Favre C. and Jonsson, M., Valuations and multiplier ideals, J. Amer. Math. Soc.\textbf{18} (3), 655-684(2005)
% \color{blue}
\bib{DGZ2016}{book}{ 
AUTHOR={Dinew, Slawomir},
AUTHOR={Guedj, Vincent},
AUTHOR={Zeriahi, Ahmed},  
%AUTHOR = {Dinew, S. and Guedj, V. and Zeriahi, A.},
     TITLE = {Open problems in pluripotential theory},
   JOURNAL = {Complex Var. Elliptic Equ.},
 % FJOURNAL = {Complex Variables and Elliptic Equations. An International
             % Journal},
    VOLUME = {61},
      YEAR = {2016},
    NUMBER = {7},
     PAGES = {902--930},}

%\color{black}

\bib{GT}{book}{
AUTHOR={Gilbarg, D.},
AUTHOR={Trudinger, N.S.},
TITLE={Elliptic Partial Differential equations of second order},
SERIES={Classics in Mathematics},
PUBLISHER={Springer Berlin, Heidelberg},
YEAR={2001},
}  %and Trudinger, N.S., Elliptic Partial Differential equations of second order, Classics in Mathematics, Springer Berlin, Heidelberg (2001).

%\bibitem{H2007} H$\rm{\ddot{o}}$rmander, L., Notions of convexity. Springer Science and Business Media, 2007.

% \color{blue}
\bib{GZ2007}{book}{ 
AUTHOR={Guedj, V.},
AUTHOR={Zeriahi, A.},  
TITLE = {Degenerate complex {M}onge-{A}mp\`ere equations},
    SERIES = {EMS Tracts in Mathematics},
    VOLUME = {26},
 PUBLISHER = {European Mathematical Society (EMS), Z\"urich},
      YEAR = {2017},
     PAGES = {xxiv+472},}
 
%\color{black}

%\bib{HLX} He, W., Li, L. and Xu, X., On residual Monge-Amp$\grave{e}$re mass of plurisubharmonic functions, III: a single frequency, arXiv 2410.15014 (2024)

\bib{John-Nirenberg1961}{article}{
AUTHOR = {John,F.},
AUTHOR={Nirenberg, L.},
TITLE={On functions of bounded mean oscillation},
JOURNAL={Commun. Pure Appl. Math.},
VOLUME={14},
PAGES={415--426},
YEAR={1961},}

\bib{Jones}{article}{
AUTHOR = {Jones, Peter W.},
     TITLE = {Extension theorems for {BMO}},
   JOURNAL = {Indiana Univ. Math. J.},
  %FJOURNAL = {Indiana University Mathematics Journal},
    VOLUME = {29},
      YEAR = {1980},
    NUMBER = {1},
     PAGES = {41--66},
}  %Jones, P., Extension Theorems for BMO, Indiana Univ. Math. J. \textbf{29}, 41-66(1980)

% \bibitem{Kiselman} Kiselman, C. O. , \emph{Densit\'e des fonctions plurisousharmoniques}. \emph{Bull. Soc. Math. France} \textbf{107} (3), (1979), 295-304.

\bib{Kiselman1979}{article}{
AUTHOR = {Kiselman, Christer O.},
     TITLE = {Densit\'e{} des fonctions plurisousharmoniques},
   JOURNAL = {Bull. Soc. Math. France},
  %FJOURNAL = {Bulletin de la Soci\'et\'e{} Math\'ematique de France},
    VOLUME = {107},
      YEAR = {1979},
    NUMBER = {3},
     PAGES = {295--304},} %Kiselman, C. O., Densit\'e des fonctions plurisousharmoniques.  Bull. Soc. Math. France \textbf{107}, 295-304(1979)

\bib{Kiselman1994}{article}{
AUTHOR = {Kiselman, Christer O.},
     TITLE = {Attenuating the singularities of plurisubharmonic functions},
   JOURNAL = {Ann. Polon. Math.},
  %FJOURNAL = {Annales Polonici Mathematici},
    VOLUME = {60},
      YEAR = {1994},
    NUMBER = {2},
     PAGES = {173--197},} %Kiselman, C. O., Attenuating the singularities of plurisbharmonic functions,  Ann. Polon. Math. \textbf{60}, 173-197(1994)

%\bibitem{Kiselman1987} Kiselman, C. O., \emph{Un nombre de Lelong raffin\'{e}, in S\'{e}minaire d‘ Analyse Complexe et G\'{e}om\'{e}trie}, (1985-87), Fac. Sci. Monastir Tunisie, pp. 61-70, 1987.

\bib{Lelong57}{article}{
AUTHOR = {Lelong, Pierre},
     TITLE = {Int\'egration sur un ensemble analytique complexe},
   JOURNAL = {Bull. Soc. Math. France},
 % FJOURNAL = {Bulletin de la Soci\'et\'e{} Math\'ematique de France},
    VOLUME = {85},
      YEAR = {1957},
     PAGES = {239--262},} %Lelong, P., Int\'egration sur un ensemble analytique complexe.  Bull. Soc. Math. France \textbf{85}, 239-262(1957)

%\bib{Lelong1969}{book}{
%AUTHOR = {Lelong, P.},
 %    TITLE = {Fonctions plurisousharmoniques et formes diff\'erentielles
 %             positives},
 %PUBLISHER = {Gordon \& Breach, Paris-London-New York; distributed by Dunod
   %           \'Editeur, Paris},
   %   YEAR = {1968},
   %  PAGES = {ix+79},
%}% Lelong, P., Fonctions Plurisousharmoniques et Formes Diff\'erentielles Positives, Gordon et Breach, New-York and Dunod, Paris (1969).

%\bibitem{Li2020} Li, L., The Lelong number, the Monge-Amp\`ere mass, and the Schwarz symmetrization of plurisubharmonic functions, Ark. Mat., \textbf{58}, 369-392(2020)

\bib{Li2023}{article}{
AUTHOR = {Li, Long},
     TITLE = {On the residual {M}onge-{A}mp\`ere mass of plurisubharmonic functions with symmetry in {$\mathbb C^2$}},
   JOURNAL = {Math. Z.},
  %FJOURNAL = {Mathematische Zeitschrift},
    VOLUME = {306},
      YEAR = {2024},
    NUMBER = {1},
     PAGES = {Paper No. 13, 40},
} %Li, L. On the residual Monge-Amp\`ere mass of plurisubharmonic functions with symmetry in $\mathbb{C}^2$, Math. Z.\textbf{ 306}(13), (2024)

\bib{Nguyen2023}{article}{
AUTHOR = {Nguy\^en, Vi\^et-Anh},
     TITLE = {Singular holomorphic foliations by curves. {III}: zero
              {L}elong numbers},
   JOURNAL = {Math. Ann.},
  %FJOURNAL = {Mathematische Annalen},
    VOLUME = {388},
      YEAR = {2024},
    NUMBER = {4},
     PAGES = {3941--3979},
} %Nguyen, V.-A., Singular holomorphic foliations by curves. III: zero Lelong numbers, Math. Ann.\textbf{ 388}, 3941-3979(2024).

%\bibitem{PanZ2024} Pan, Y. and Zhang, Y., Continuous Sobolev functions with singularity on arbitrary real-analytic sets. Pacific J. Math. \textbf{332}(2), 261-274(2024) 

\bib{Rash2016}{article}{
AUTHOR={ Rashkovskii, A.},
TITLE={Some problems on plurisubharmonic singularities},
JOURNAL = {Mat. Stud.},
VOLUME={45},
YEAR={2016},
PAGES = {104--108},
}

\bib{Ra2016}{unpublished}{
AUTHOR={ Rashkovskii, A.},
TITLE={Zero Lelong number problem},
Note={arXiv 1611.02470},
YEAR={2016},
} %Rashkovskii A., Zero Lelong number problem, arXiv 1611.02470, (2016).

\bib{Sarason1975}{article}{
AUTHOR = {Sarason, Donald},
     TITLE = {Functions of vanishing mean oscillation},
   JOURNAL = {Trans. Amer. Math. Soc.},
  %FJOURNAL = {Transactions of the American Mathematical Society},
    VOLUME = {207},
      YEAR = {1975},
     PAGES = {391--405},
} %Sarason D., Functions of vanishing mean oscillation. Trans. Amer. Math. Soc. \textbf{207},  391-405(1975)

\bib{Skoda1972}{article}{AUTHOR = {Skoda, H.},
     TITLE = {Sous-ensembles analytiques d'ordre fini ou infini dans {${\bf
              C}\sp{n}$}},
 BOOKTITLE = {Fonctions analytiques de plusieurs variables et analyse
              complexe ({C}olloq. {I}nternat. {CNRS}, {N}o. 208, {P}aris,
              1972)},
    SERIES = {Agora Math.},
    VOLUME = {No. 1},
     PAGES = {p. 235},} %Skoda, H.,  Sous-ensembles analytiques d'ordre fini ou infini dans $\C^n$.  Bull. Soc. Math. France, \textbf{100}, 353-408(1972)

\bib{Siu1974}{article}{
AUTHOR = {Siu, Yum Tong},
     TITLE = {Analyticity of sets associated to {L}elong numbers and the
              extension of closed positive currents},
   JOURNAL = {Invent. Math.},
 % FJOURNAL = {Inventiones Mathematicae},
    VOLUME = {27},
      YEAR = {1974},
     PAGES = {53--156},
}  %Siu, Y. T.,  Analyticity of sets associated to Lelong numbers and the extension of closed positive currents.  Invent.  Math.  \textbf{27}(1) 53-156 (1974)

\bib{Siu1975}{article}{
AUTHOR = {Siu, Yum Tong},
     TITLE = {Extension of meromorphic maps into {K}\"ahler manifolds},
   JOURNAL = {Ann. of Math. (2)},
 % FJOURNAL = {Annals of Mathematics. Second Series},
    VOLUME = {102},
      YEAR = {1975},
    NUMBER = {3},
     PAGES = {421--462},
} % Siu, Y T., Extension of meromorphic maps into K\"ahler manifolds. Ann. of Math.   \textbf{102}(3)  421-462(1975)

\bib{Vig2007}{article}{
AUTHOR = {Vigny, G.},
     TITLE = {Dirichlet-like space and capacity in complex analysis in
              several variables},
   JOURNAL = {J. Funct. Anal.},
 % FJOURNAL = {Journal of Functional Analysis},
    VOLUME = {252},
      YEAR = {2007},
    NUMBER = {1},
     PAGES = {247--277},
}  %Vigny G. ,  Dirichlet-like space and capacity in complex analysis in several variables.  J. Funct. Anal.   \textbf{252}(1)  247-277(2007)

\bib{Zeriahi2001}{article}{
AUTHOR = {Zeriahi, Ahmed},
     TITLE = {Volume and capacity of sublevel sets of a {L}elong class of plurisubharmonic functions},
   JOURNAL = {Indiana Univ. Math. J.},
 % FJOURNAL = {Indiana University Mathematics Journal},
    VOLUME = {50},
      YEAR = {2001},
    NUMBER = {1},
     PAGES = {671--703},
} %Zeriahi, A., Volume and capacity of sublevel sets of a Lelong class of plurisubharmonic functions, Indiana Univ.  Math.  J. \textbf{50}, 671-703(2001)

\end{biblist}
\end{bibdiv}

%\end{thebibliography}

\end{document}